\documentclass[11pt]{article}

\usepackage[a4paper,margin=1in]{geometry}
\usepackage{amsmath, amssymb, amsthm}
\usepackage{mathtools}
\usepackage{hyperref}
\usepackage{enumitem}
\usepackage{cleveref}

\hypersetup{
    colorlinks=true,
    linkcolor=blue,
    citecolor=blue,
    urlcolor=blue,
    pdftitle={Ternary Idempotent Gamma-Semirings},
    pdfauthor={Chandrasekhar Gokavarapu}
}

\numberwithin{equation}{section}

\newtheorem{theorem}{Theorem}[section]
\newtheorem{proposition}[theorem]{Proposition}

\newtheorem{corollary}[theorem]{Corollary}
\newtheorem{definition}[theorem]{Definition}

\begin{document}

\title{Ternary Idempotent $\Gamma$-Semirings, Non-Reducibility, and Higher-Order Path Algebras}

\author{
Chandrasekhar Gokavarapu\\
Research Scholar, Department of Mathematics
Acharya Nagarjuna University, India\\
Lecturer in Mathematics\\
Government College (Autonomous), Rajahmundry, India\\
\texttt{chandrasekhargokavarapu@gmail.com}
\and
D.~Madhusudhana Rao\\
Research Supervisor, Department of Mathematics, Acharya Nagarjuna University, Guntur, India\\
Lecturer in Mathematics,
Government College for Women (Autonomous), Guntur, India
}

\date{}

\maketitle

\noindent\textbf{MSC 2020:} 16Y60, 68R10, 06B35, 68Q25

\noindent\textbf{Keywords:} Idempotent semiring, ternary algebra, higher-order path algebra, associative triple systems

\begin{abstract}
Binary idempotent semirings govern classical path algebras. 
Their multiplicative structure is dyadic. 
We examine whether this restriction is structural or accidental.

We define ternary idempotent $\Gamma$-semirings as higher-arity ordered algebraic systems admitting associative ternary composition compatible with idempotent addition. 
We prove that such structures strictly extend classical semiring path algebras. 
In particular, we construct a ternary associative operation which cannot be represented as an iterated associative binary operation. 
This establishes non-reducibility.

We formulate a higher-order path problem in directed graphs with weights in a ternary idempotent $\Gamma$-semiring. 
The associated relaxation operator is shown to be monotone on a complete lattice and to admit a least fixed point. 
Convergence follows under a finite acyclicity condition. 
The combinatorial growth of interaction windows yields a distinct complexity class relative to binary path schemes.

These results indicate that dyadic semiring frameworks do not exhaust algebraic path formalisms. 
Higher-arity composition introduces structural phenomena absent in binary systems.
\end{abstract}

\tableofcontents

\section{Introduction}

Binary idempotent semirings govern classical path algebras. 
Their multiplicative structure is dyadic. 
The algebraic consequence is well known: every path weight decomposes into successive binary compositions. 
This structural assumption appears in semiring shortest-path frameworks \cite{Mohri2002SemiringShortestDistance}, in max-plus systems \cite{Butkovic2010MaxLinearSystems}, and in tropical optimization \cite{Krivulin2015TropicalOptimization}. 
The same dyadic paradigm underlies semiring-based automata theory \cite{DrosteKuich2024TCSUniversalSupport} and weighted formal methods \cite{AlmagorBokerLehtinen2022DecidableWeightedAutomata}.

The assumption is rarely questioned. 
It is treated as canonical.

Let us examine it.

A path algebra requires three properties. 
First, an idempotent selection operation. 
Second, an associative composition rule. 
Third, compatibility between the two. 
Binary semiring theory provides these. 
The resulting frameworks are complete for a wide class of weighted automata and graph problems \cite{Mohri2002SemiringShortestDistance}. 
Indeed, dynamic programming over semirings has become standard \cite{BarilCouceiroLagerkvist2025SemiringDPArXiv}. 
Yet the algebraic structure remains dyadic.

The question arises: is dyadic composition structurally necessary?

Ternary algebraic systems exist independently of semiring theory. 
Associative triple systems have been studied in their own right \cite{Bremner2025OperatorIdentitiesTripleSystems, Felipe2025AssociativeTripleTrisystems}. 
Operator identities of multiplicity three reveal structural phenomena absent in binary algebras \cite{Bremner2025OperatorIdentitiesMultiplicity3}. 
These systems are not reducible, in general, to iterated binary operations. 
The literature treats them separately from semiring path frameworks.

This separation suggests a gap.

Idempotent semirings admit lattice-theoretic interpretation. 
The canonical order renders addition a meet operation. 
Monotone operators then admit least fixed points \cite{BranzeiPhillipsRecker2025DiscMath}. 
Such fixed-point reasoning appears in program semantics \cite{Batz2022SemiringProgramSemantics} and in lattice-theoretic verification methods \cite{KoriUrabeKatsumataSuenagaHasuo2022LTPDR}. 
These arguments rely only on order and monotonicity. 
Binary multiplication is not essential to the fixed-point theorem itself.

It follows that higher-arity composition is not immediately excluded by order theory.

The present work develops ternary idempotent $\Gamma$-semirings. 
The addition remains idempotent. 
Composition becomes ternary and associative in the sense of triple systems. 
Compatibility with order is preserved. 
The classical semiring case appears as a degeneration.

This leads to a dilemma. 
Either every associative ternary operation factors through an associative binary operation, or genuine higher-arity path algebras exist. 
The former would reduce ternary systems to semiring theory. 
The latter would enlarge the algebraic landscape.

We establish the latter.

A concrete ternary associative operation is constructed which admits no representation of the form $(x\otimes y)\otimes z$ with $\otimes$ associative. 
Hence non-reducibility holds. 
The proof is finite. 
No analytic argument is required. 
This separation theorem places ternary idempotent $\Gamma$-semirings strictly beyond classical semiring path algebras.

The construction is not speculative. 
Recent studies of idempotent semiring extensions and structural identities \cite{RenJacksonZhaoLei2023JAlgebraFlatExtensions, OteroSanchez2024LinearSystemsIdempotentSemirings} indicate that algebraic phenomena persist beyond binary frameworks. 
Dagstuhl reports on semirings in logic and automata further demonstrate the breadth of the dyadic paradigm \cite{BadiaDrosteKolaitisNogueraBrinkeMrkonjicPetreni2025DagstuhlSemirings}. 
The absence of higher-arity path structures in that context is therefore notable.

We next formulate a higher-order path problem in directed graphs with weights in a ternary idempotent $\Gamma$-semiring. 
The associated relaxation operator is monotone. 
Hence it admits a least fixed point in the complete lattice induced by idempotent addition \cite{BranzeiPhillipsRecker2025DiscMath}. 
Convergence holds under a finite acyclicity condition analogous to classical path termination \cite{Mohri2002SemiringShortestDistance}. 
The combinatorial structure differs. 
Binary interaction windows are replaced by ternary windows. 
The complexity reflects this enlargement.

The conceptual expansion is immediate. 
Binary semiring path algebras represent only dyadic interaction. 
Ternary idempotent $\Gamma$-semirings admit strictly richer interaction patterns. 
These patterns survive formal scrutiny. 
They cannot be reduced to classical semiring multiplication. 
The result is a genuine algebraic generalization.

The dimensional generalization is equally natural. 
Nothing in the order-theoretic argument confines composition to arity three. 
Higher arity systems may be constructed analogously. 
The ternary case is the first nontrivial instance. 
Its non-reducibility establishes necessity of new algebraic tools.

The broader theoretical impact is therefore structural. 
Semiring path algebras are not maximal. 
They form a proper subclass of ordered higher-arity path algebras. 
The dyadic assumption, long treated as canonical, is contingent.

The remainder of the paper proceeds axiomatically. 
Section~\ref{sec:preliminaries} fixes notation. 
Section~\ref{sec:ttgs} defines ternary idempotent $\Gamma$-semirings. 
Section~\ref{sec:nonred} proves non-reducibility. 
Subsequent sections develop the higher-order path problem and its fixed-point characterization. 
The exposition is linear. 
Each theorem follows from stated axioms. 
No heuristic argument is employed.
\section{Preliminaries}\label{sec:preliminaries}

We fix notation. 
All sets are assumed nonempty.

\subsection{Idempotent Semilattices and Canonical Order}

\begin{definition}
A commutative semigroup $(T,\oplus)$ is \emph{idempotent} if
\[
a \oplus a = a \quad \forall a\in T.
\]
\end{definition}

Idempotency induces a partial order.

\begin{definition}
Define
\[
a \le b \iff a \oplus b = a.
\]
\end{definition}

\begin{proposition}
$(T,\le)$ is a partially ordered set.
\end{proposition}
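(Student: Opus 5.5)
We verify the three partial-order axioms directly from the definition $a\le b \iff a\oplus b=a$, using only idempotency, commutativity, and associativity of $\oplus$. No earlier result beyond the two definitions is needed.

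\emph{Reflexivity} is immediate. For any $a\in T$ we have $a\oplus a=a$ by idempotency, hence $a\le a$.

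\emph{Antisymmetry} uses commutativity. Suppose $a\le b$ and $b\le a$. Then $a\oplus b=a$ and $b\oplus a=b$. Since $a\oplus b=b\oplus a$, we obtain $a=b$.

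\emph{Transitivity} is the only step requiring more than one line, and it is where associativity enters. Suppose $a\le b$ and $b\le c$, so that $a\oplus b=a$ and $b\oplus c=b$. We compute
\[
a\oplus c=(a\oplus b)\oplus c=a\oplus(b\oplus c)=a\oplus b=a,
\]
substituting $a=a\oplus b$ in the first step, applying associativity in the second, and then using $b\oplus c=b$ and $a\oplus b=a$. Hence $a\le c$.

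The main care point is bookkeeping rather than difficulty. Transitivity must substitute $a$ by $a\oplus b$ on the correct side before regrouping. Also, the ordering convention is such that $\oplus$ is the meet (greatest lower bound), consistent with the introduction's remark. We may optionally note that $a\oplus b$ is indeed the greatest lower bound of $\{a,b\}$, but that is not required for the proposition.
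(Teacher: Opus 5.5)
Your proposal is correct and follows the same approach as the paper: reflexivity from idempotency, antisymmetry from commutativity, and transitivity from associativity. You have filled in the one-line computations that the paper's terse proof leaves implicit.
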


\begin{proof}
Reflexivity and antisymmetry follow from idempotency and commutativity. 
Transitivity follows from associativity.
\end{proof}

Thus $(T,\oplus)$ is a meet-semilattice. 
If arbitrary infima exist, it is complete.

Complete idempotent semirings are standard in path algebra \cite{Mohri2002SemiringShortestDistance}. 
The lattice-theoretic viewpoint underlies fixed-point constructions \cite{BranzeiPhillipsRecker2025DiscMath}.

\subsection{Idempotent Semirings}

\begin{definition}
An \emph{idempotent semiring} is a structure $(T,\oplus,\otimes)$ such that:

\begin{enumerate}
\item $(T,\oplus)$ is a commutative idempotent semigroup.
\item $(T,\otimes)$ is associative.
\item $\otimes$ distributes over $\oplus$.
\end{enumerate}
\end{definition}

Classical shortest-path frameworks operate over such structures \cite{Mohri2002SemiringShortestDistance}. 
Max-plus systems provide canonical examples \cite{Butkovic2010MaxLinearSystems}. 
Tropical optimization is formulated within the same paradigm \cite{Krivulin2015TropicalOptimization}.

Binary associativity implies that path weight of
\[
w_1,w_2,\dots,w_k
\]
is unambiguously
\[
(((w_1\otimes w_2)\otimes w_3)\otimes \cdots)\otimes w_k.
\]

The multiplicative structure is dyadic.

\subsection{Monotone Operators and Fixed Points}

Let $(T,\le)$ be a complete lattice.

\begin{definition}
A function $F:T\to T$ is \emph{monotone} if
\[
x \le y \Rightarrow F(x)\le F(y).
\]
\end{definition}

\begin{theorem}[Knaster--Tarski]
Every monotone operator on a complete lattice admits a least fixed point.
\end{theorem}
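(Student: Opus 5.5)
The plan is the classical Tarski argument: take the least fixed point to be the infimum of the pre-fixed points of $F$. Let $(T,\le)$ be a complete lattice and $F:T\to T$ monotone. Set
\[
P=\{x\in T : F(x)\le x\},
\]
the set of pre-fixed points. $P$ is nonempty because the top element $\top=\bigvee T$, which exists by completeness, satisfies $F(\top)\le\top$. Define $m=\bigwedge P$, which exists because arbitrary infima exist in $T$. In the paper's convention, where $a\le b$ iff $a\oplus b=a$, this infimum is the (possibly infinite) $\oplus$-sum over $P$; nothing beyond completeness is used.

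\textbf{Step 1: $m\in P$.} For every $x\in P$ we have $m\le x$. Monotonicity gives $F(m)\le F(x)$, and $F(x)\le x$ since $x\in P$, so $F(m)\le x$. Thus $F(m)$ is a lower bound of $P$, and by the definition of infimum $F(m)\le m$.

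\textbf{Step 2: $m$ is a fixed point.} Apply monotonicity to Step 1 to get $F(F(m))\le F(m)$, so $F(m)\in P$. Since $m$ is a lower bound of $P$, $m\le F(m)$. Combining with Step 1, antisymmetry (the preceding proposition that $(T,\le)$ is a poset) yields $F(m)=m$.

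\textbf{Step 3: leastness.} Every fixed point $y$ satisfies $F(y)=y\le y$, so $y\in P$ and hence $m\le y$.

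There is no genuine obstacle; the only delicate point is the direction of the order. Under the paper's convention $a\le b\iff a\oplus b=a$, the sum $\oplus$ acts as meet. "Least" should therefore be read in $\le$, and completeness must supply the infimum $\bigwedge P$ and a top element. If the paper's completeness assumption only guaranteed infima, a top would still exist as the infimum of the empty set, so Step 1 and nonemptiness of $P$ survive.
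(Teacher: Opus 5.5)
Your argument is correct and complete. It is Tarski's classical proof: take $m=\bigwedge\{x : F(x)\le x\}$, show $F(m)\le m$, then $m\le F(m)$, then leastness. The paper gives no proof of its own for this statement; it quotes Knaster--Tarski as a standard result with a citation. So your proof supplies the standard argument that the citation stands in for. Your observation that only arbitrary infima are needed, with $\bigwedge\emptyset$ supplying $\top$, fits the paper's definition of completeness as the existence of arbitrary $\oplus$-sums.
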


This theorem is foundational in ordered algebra and verification theory \cite{KoriUrabeKatsumataSuenagaHasuo2022LTPDR}. 
In semiring path problems, relaxation operators are monotone \cite{Mohri2002SemiringShortestDistance}. 
Fixed-point convergence follows from completeness.

Observe that this theorem depends only on order, not on binary multiplication.

\subsection{Associative Triple Systems}

\begin{definition}
A \emph{ternary system} is a set $T$ with a map
\[
[\cdot,\cdot,\cdot]:T^3\to T.
\]
\end{definition}

\begin{definition}
The ternary operation is \emph{associative} if
\[
[[x,y,z],u,v]
=
[x,[y,z,u],v]
=
[x,y,[z,u,v]]
\quad \forall x,y,z,u,v\in T.
\]
\end{definition}

Associative triple systems have been studied independently of semiring theory \cite{Bremner2025OperatorIdentitiesTripleSystems, Felipe2025AssociativeTripleTrisystems}. 
Their structural identities differ from binary algebras \cite{Bremner2025OperatorIdentitiesMultiplicity3}.

Binary associativity does not imply ternary associativity. 
Conversely, ternary associativity does not guarantee reducibility to a binary operation. 
This leads to the non-reducibility question addressed later.

\subsection{Semiring Path Frameworks}

Let $G=(V,E)$ be a directed graph. 
Given weights in an idempotent semiring, define for a path
\[
P=(v_0,v_1,\dots,v_k)
\]
the weight
\[
w(P)=w(v_0,v_1)\otimes\cdots\otimes w(v_{k-1},v_k).
\]

Global optimality is obtained by
\[
\bigoplus_{P} w(P).
\]

This formalism appears in automata theory \cite{DrosteKuich2024TCSUniversalSupport}, in weighted automata learning \cite{DaviaudJohnson2025LMCS}, and in dynamic programming over semirings \cite{BarilCouceiroLagerkvist2025SemiringDPArXiv}. 
The framework is inherently dyadic.

The present work replaces the binary operation by an associative ternary composition while preserving idempotent order. 
The binary case will emerge as a degeneration.

\subsection{Structural Limitation of Dyadic Composition}

Consider a path segment of length three with weights $x,y,z$. 
Binary semiring theory prescribes
\[
(x\otimes y)\otimes z.
\]

Any higher-order interaction must factor through this expression. 
The factorization is structural, not incidental.

Toward this end, we examine whether a ternary associative operation may exist which does not admit such factorization. 
If so, classical semiring path algebras form a proper subclass of higher-arity ordered systems.

The remainder of the paper proceeds under this hypothesis.
\section{Ternary Idempotent $\Gamma$-Semirings}\label{sec:ttgs}

We now define the principal object.

\subsection{Axiomatic Definition}

\begin{definition}
A \emph{ternary idempotent $\Gamma$-semiring} is a structure
\[
\mathcal{T}=(T,\oplus,[\cdot,\cdot,\cdot]_\gamma,\Gamma)
\]
such that:
\begin{enumerate}
\item $(T,\oplus)$ is a complete commutative idempotent semigroup.
\item For each $\gamma\in\Gamma$, the map
\[
[\cdot,\cdot,\cdot]_\gamma:T^3\to T
\]
is associative in the ternary sense:
\[
[[x,y,z]_\gamma,u,v]_\gamma
=
[x,[y,z,u]_\gamma,v]_\gamma
=
[x,y,[z,u,v]_\gamma]_\gamma.
\]
\item Each $[\cdot,\cdot,\cdot]_\gamma$ is monotone in each coordinate with respect to the canonical order induced by $\oplus$.
\item Distributivity holds:
\[
[x\oplus x',y,z]_\gamma
=
[x,y,z]_\gamma \oplus [x',y,z]_\gamma,
\]
and analogously in the remaining coordinates.
\end{enumerate}
\end{definition}

No binary multiplication is assumed.

\subsection{Order Compatibility}

\begin{proposition}
For fixed $u,v\in T$, the map
\[
F(x)=[x,u,v]_\gamma
\]
is monotone.
\end{proposition}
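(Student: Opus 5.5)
The claim is a direct consequence of the axioms in the definition of a ternary idempotent $\Gamma$-semiring. There are two natural routes. One is to invoke axiom (3), monotonicity in each coordinate, with the second and third coordinates frozen at $u,v$. The other is to derive monotonicity from distributivity, axiom (4), together with the definition of the canonical order. I would present the distributivity argument as the main proof. It shows that monotonicity is not an independent assumption but is forced by additivity in the first coordinate, so axiom (3) is redundant given (4).

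\textbf{Plan.} Fix $u,v\in T$ and $\gamma\in\Gamma$, and suppose $x\le y$. By the definition of the canonical order this means $x\oplus y=x$. First, apply $F$ to both sides, which gives $F(x)=F(x\oplus y)=[x\oplus y,u,v]_\gamma$. Second, use distributivity in the first coordinate, axiom (4), to rewrite this as
\[
[x\oplus y,u,v]_\gamma=[x,u,v]_\gamma\oplus[y,u,v]_\gamma=F(x)\oplus F(y).
\]
Combining the two steps yields $F(x)\oplus F(y)=F(x)$. By the definition of the order, this says exactly $F(x)\le F(y)$, which is the required monotonicity.

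\textbf{Main obstacle.} The only delicate point is orientation. The paper's order is defined by $a\le b\iff a\oplus b=a$, so $\oplus$ is the meet rather than the join. I must therefore check that the equality $F(x)\oplus F(y)=F(x)$ certifies $F(x)\le F(y)$ and not the reverse. It does, because the smaller element is the one that $\oplus$ returns.

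No completeness and no associativity of $[\cdot,\cdot,\cdot]_\gamma$ is needed. I would add a remark that the same argument gives monotonicity in the second and third coordinates as well, by the analogous distributive laws.
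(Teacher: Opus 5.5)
Your argument is correct, but it takes a different route from the paper. The paper's proof is a one-line appeal to axiom (3), monotonicity in each coordinate, applied to the first coordinate with $u,v$ held fixed. You instead derive the inequality from first-coordinate distributivity, axiom (4): from $x\oplus y=x$ you get $F(x)=F(x\oplus y)=F(x)\oplus F(y)$. Under the paper's convention $a\le b\iff a\oplus b=a$, this is exactly $F(x)\le F(y)$, so your orientation check is right.

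The two routes rest on different axioms. The paper's argument uses only the order axiom, so it would survive if distributivity were weakened or dropped. Yours uses only additivity, so it shows that axiom (3) is redundant in the presence of axiom (4). This is the standard fact that a map preserving $\oplus$ preserves the canonical order. The same computation gives monotonicity in the other two coordinates, and in the $k$-ary systems of Section~\ref{sec:dimensional}, whose axioms have the same form. As you note, neither argument needs completeness or ternary associativity.
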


\begin{proof}
By axiom (3), if $x\le y$ then
\[
[x,u,v]_\gamma \le [y,u,v]_\gamma.
\]
\end{proof}

Thus ternary composition preserves the canonical order.

\begin{proposition}
If $(T,\oplus)$ is complete, then for fixed $u,v$,
\[
F\Big(\bigoplus_i x_i\Big)
=
\bigoplus_i F(x_i).
\]
\end{proposition}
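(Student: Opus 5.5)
The plan is to prove the identity first for finite families and then pass to arbitrary families using the order structure of the complete semilattice $(T,\oplus)$. Throughout, $\le$ is the canonical order, so $\bigoplus_i x_i$ is the infimum of $\{x_i\}$.

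\textbf{Step 1 (finite case).} For a finite family $x_1,\dots,x_n$, I would argue by induction on $n$ using axiom (4) in the first coordinate. The case $n=1$ is trivial. For the inductive step,
\[
F\Big(\bigoplus_{i\le n+1} x_i\Big)=\Big[\Big(\bigoplus_{i\le n} x_i\Big)\oplus x_{n+1},u,v\Big]_\gamma ,
\]
and this splits by distributivity into $F\big(\bigoplus_{i\le n}x_i\big)\oplus F(x_{n+1})$. The induction hypothesis then finishes it. Associativity and commutativity of $\oplus$ ensure the bracketing of the finite sum is irrelevant.

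\textbf{Step 2 (one inequality for arbitrary families).} Put $s=\bigoplus_i x_i$. Since $s$ is the infimum, $s\le x_j$ for every $j$. By the preceding Proposition (monotonicity of $F$), $F(s)\le F(x_j)$ for every $j$. So $F(s)$ is a lower bound of $\{F(x_j)\}$, and by completeness
\[
F(s)\le \bigoplus_j F(x_j).
\]
This direction uses only axiom (3).

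\textbf{Step 3 (reverse inequality; main obstacle).} We must show $\bigoplus_j F(x_j)\le F(s)$, that is, that $F$ commutes with infinite infima and not merely finite ones. This does not follow from monotonicity and finite distributivity alone. A monotone, finitely additive map on a complete semilattice need not preserve infinite meets. A standard counterexample is a map on a chain with a jump at the limit point.

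What I would try first is to read axiom (4), in the presence of the completeness requirement in axiom (1), as \emph{infinitary} distributivity:
\[
\Big[\bigoplus_i x_i,y,z\Big]_\gamma=\bigoplus_i [x_i,y,z]_\gamma .
\]
This is the standard convention for complete idempotent semirings in path algebra \cite{Mohri2002SemiringShortestDistance}. Under that reading, the statement is immediate, and Steps 1--2 show it is consistent with the finite axioms.

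If only finite distributivity is intended, I would instead add a continuity hypothesis. Concretely, I would require that $F$ preserve infima of down-directed families. I would then write $\bigoplus_i x_i$ as the directed infimum of its finite partial sums, apply Step 1 to each partial sum, and pass to the limit. The write-up would say explicitly which of these two hypotheses is being used. Without one of them, only the inequality of Step 2 holds.
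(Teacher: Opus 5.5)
Your proposal is sound, and it is more careful than the paper's own argument. The paper's proof is the single line ``apply distributivity repeatedly and use completeness.'' Its first half is your Step 1. Its second half is exactly the finite-to-infinite passage you isolate in Step 3.

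Your diagnosis of that passage is correct. Completeness only guarantees that $\bigoplus_i x_i$ and $\bigoplus_i F(x_i)$ exist, not that $F$ carries one to the other, so the paper's proof has a gap there. In fact the proposition is false under the literal binary form of axiom (4), and the jump example you allude to can be realized inside the paper's axioms:
\begin{itemize}
\item Take $T=[0,1]$ with $\oplus=\min$, so the canonical order is the usual one and $\bigoplus$ is the infimum.
\item Let $g(0)=0$ and $g(t)=1$ for $t>0$, and put $[x,y,z]_\gamma=g(x)$.
\item Since $g\circ g=g$, all three expressions in axiom (2) equal $g(x)$.
\item $g$ is monotone, and a monotone map on a chain preserves binary minima, so axioms (3) and (4) hold (trivially in the second and third coordinates).
\end{itemize}
For $x_n=1/n$ one gets $F\bigl(\bigoplus_n x_n\bigr)=g(0)=0$ but $\bigoplus_n F(x_n)=1$. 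So your Step 2 inequality can be strict.

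Hence an extra hypothesis is unavoidable, and either of your repairs works. Under infinitary distributivity the statement is an instance of the axiom. Under preservation of down-directed infima, your argument via the directed family of finite partial sums $\bigoplus_{j\in J}x_j$ ($J$ finite, nonempty), combined with Step 1, is correct.

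One small caveat for the second repair: it covers only nonempty index families. For the empty family the identity reads $F(\top_T)=\top_T$. This fails for the constant operation $[x,y,z]_\gamma=0_T$ on any complete $T$ with more than one element, even though that operation satisfies all four axioms. So either restrict to nonempty families or add $[\top_T,u,v]_\gamma=\top_T$ to the hypothesis.
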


\begin{proof}
Apply distributivity repeatedly and use completeness.
\end{proof}

This identity parallels distributivity in classical semirings \cite{Mohri2002SemiringShortestDistance}.

\subsection{Window Independence}

Binary path algebras rely on associativity to guarantee unambiguous evaluation. 
The ternary case requires a stronger statement.

\begin{theorem}
Let $w_1,\dots,w_k\in T$. 
If the ternary operation is associative, then any parenthesization formed by consecutive ternary windows yields the same value.
\end{theorem}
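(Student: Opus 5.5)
We prove the general associativity law for ternary operations. Note first that a full ternary parenthesization of $w_1,\dots,w_k$ exists only when $k$ is odd, $k=2m+1$. Any such bracketing is a ternary tree with $m$ internal nodes, whose leaves read $w_1,\dots,w_k$ from left to right. We show that every such tree evaluates to the same element. As reference value we take the left-normed product $L(w_1,\dots,w_k)$, defined by $L(w_1)=w_1$ and
\[
L(w_1,\dots,w_{2m+1})=[L(w_1,\dots,w_{2m-1}),w_{2m},w_{2m+1}]_\gamma .
\]
The argument is a strong induction on $m$, the analogue of the classical proof of generalized associativity for binary operations.

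\textbf{Step 1 (a lemma).} For any odd-length blocks $A,B,C$, we show
\[
[L(A),L(B),L(C)]_\gamma = L(A,B,C),
\]
where $L(A,B,C)$ denotes the left-normed product of the concatenated sequence. We induct first on the length of $C$, then on the length of $B$.

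If $|B|=|C|=1$, the identity is the definition of $L$.

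If $|C|>1$, write $L(C)=[L(C'),c_1,c_2]_\gamma$. The ternary associativity axiom $[x,y,[z,u,v]]=[[x,y,z],u,v]$ gives
\[
[L(A),L(B),L(C)]_\gamma=[[L(A),L(B),L(C')]_\gamma,c_1,c_2]_\gamma .
\]
The inner term equals $L(A,B,C')$ by induction, and the whole expression is then $L(A,B,C)$ by definition.

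If $|C|=1$ and $|B|>1$, write $L(B)=[L(B'),b_1,b_2]_\gamma$. The middle-to-left form of associativity, $[x,[y,z,u],v]=[[x,y,z],u,v]$, rewrites the expression as
\[
[[L(A),L(B'),b_1]_\gamma,b_2,c]_\gamma .
\]
We reduce the inner term by induction on $|B|$. The outer bracket is then of the form $[L(\cdot),b_2,c]_\gamma$ with singleton last two blocks, which is the base case.

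\textbf{Step 2 (main induction).} Let a bracketing $P$ of $w_1,\dots,w_k$ with $k>1$ be given. Its root splits the word into three consecutive odd-length blocks $A,B,C$, and $P=[P_A,P_B,P_C]_\gamma$. Each sub-bracketing has fewer internal nodes than $P$. By induction, $P_A=L(A)$, $P_B=L(B)$ and $P_C=L(C)$. Step 1 then gives $P=L(w_1,\dots,w_k)$. Hence every bracketing equals $L$, so all bracketings agree. The base case $k=1$ is trivial.

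\textbf{Main obstacle.} The delicate point is Step 1. It needs a careful nested induction so that each application of an associativity axiom strictly decreases a well-founded measure, and we must use exactly the two forms of associativity that the axiom supplies. We order pairs $(|C|,|B|)$ lexicographically and check that every rewrite lowers this pair or lands in the definition of $L$. Two side remarks belong in the write-up. First, if ``consecutive windows'' is read as allowing words of even length, the statement is vacuous there and should be restricted to odd $k$. Second, distributivity and the order axioms are not used: only axiom (2) matters, so the result holds for each fixed $\gamma$ separately.
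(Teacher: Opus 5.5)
Your proof is correct, but it takes a different route from the paper's. The paper argues by local moves: the identity $[[x,y,z]_\gamma,u,v]_\gamma=[x,[y,z,u]_\gamma,v]_\gamma$ lets a window ``slide,'' and the paper then simply asserts that repeated sliding transforms any window configuration into any other. The later Theorem~\ref{thm:window-invariance} adds a hint of a confluence argument: commute disjoint contractions, resolve overlapping ones by associativity, and induct on $k$.

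You instead fix a normal form, the left-normed product $L$, and show that every bracketing equals it. The key step is the three-block lemma $[L(A),L(B),L(C)]_\gamma=L(A,B,C)$, proved by lexicographic induction on $(|C|,|B|)$. Both of your reduction cases do lower that measure, and the main induction on internal nodes then follows immediately. This is the ternary analogue of the textbook proof of generalized associativity.

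What your route buys is a well-founded argument in place of the paper's unproved connectivity claim, and that claim is the nontrivial content of the theorem. The paper's local-move picture matches the sliding-window language used later for path folding. To make it rigorous, however, one needs either your normal-form lemma or a genuine local-confluence check. A confluence check must use that $k$ is odd: overlapping contractions such as $(x,y,z)$ and $(y,z,u)$ can be rejoined only when a fifth letter is present. For $k=4$, the partial results $[x,y,z]_\gamma,u$ and $x,[y,z,u]_\gamma$ are both irreducible. So your parity remark is worth keeping. The paper's statement is silent on parity, and parity is exactly where the paper's style of proof would need care.
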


\begin{proof}
The associativity identity permits sliding of evaluation windows:
\[
[[x,y,z],u,v]
=
[x,[y,z,u],v].
\]
Repeated application transforms any admissible window configuration into any other.
\end{proof}

Thus path evaluation is well defined.

\subsection{Degeneration to Binary Semirings}

The dyadic case appears as a special instance.

\begin{proposition}
Suppose there exists a binary operation $\otimes$ on $T$ such that
\[
[x,y,z]_\gamma = (x\otimes y)\otimes z
\quad \forall x,y,z.
\]
If $\otimes$ is associative and distributes over $\oplus$, then $(T,\oplus,\otimes)$ is an idempotent semiring.
\end{proposition}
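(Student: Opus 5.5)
The proposition is essentially a verification against the definition of an idempotent semiring given in the Preliminaries, so the plan is to check the three defining clauses one at a time. The hypotheses are arranged so that each clause is either inherited from the ambient ternary idempotent $\Gamma$-semiring or assumed outright.

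First, clause (1) requires that $(T,\oplus)$ be a commutative idempotent semigroup. This follows directly from axiom (1) of the definition of a ternary idempotent $\Gamma$-semiring. That axiom gives even more, namely completeness, which we simply discard. Second, clause (2) requires associativity of $\otimes$, and this is assumed in the statement. Third, clause (3) requires that $\otimes$ distribute over $\oplus$, which is also assumed. I will read this as two-sided distributivity,
\[
x\otimes(y\oplus z)=(x\otimes y)\oplus(x\otimes z),\qquad (y\oplus z)\otimes x=(y\otimes x)\oplus(z\otimes x),
\]
since this is the standard sense used in the semiring definition of Section~\ref{sec:preliminaries}.

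For completeness I would add a consistency remark, showing that the representation $[x,y,z]_\gamma=(x\otimes y)\otimes z$ does not conflict with the ternary axioms. With $\otimes$ associative, each of the three expressions in ternary associativity reduces to the fivefold product $x\otimes y\otimes u\otimes z\otimes v$ in its natural order. Likewise, applying binary distributivity twice yields the ternary distributivity axiom (4) in each coordinate. In this way the degeneration is seen to be two-way compatible, but none of this is needed for the conclusion.

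The only step needing care is the reading of ``distributes over $\oplus$''. If only one-sided distributivity were meant, I would try to recover the other side from axiom (4) of the ternary structure. Doing so would require cancelling a factor, which is not available without an identity element. So I will commit to interpreting the hypothesis as two-sided distributivity, matching the semiring definition. With that interpretation the proof is immediate and does not even use the $\Gamma$-indexing.
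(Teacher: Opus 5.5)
Your proof is correct, and it takes a different route from the paper's. The paper does not just cite the hypotheses. It claims that ternary associativity gives $((x\otimes y)\otimes z)\otimes u=(x\otimes(y\otimes z))\otimes u$ and concludes that $\otimes$ is associative. It then says distributivity ``follows from axiom (4).'' Neither derivation is valid as written.

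\begin{itemize}
\item \textbf{Associativity.} Under the factorization, ternary associativity only equates fivefold products, such as $(((x\otimes y)\otimes z)\otimes u)\otimes v$ and $(x\otimes((y\otimes z)\otimes u))\otimes v$. Getting from these to $(x\otimes y)\otimes z=x\otimes(y\otimes z)$ requires cancelling trailing factors. For example, a constant ternary operation can factor through a non-associative $\otimes$ whose products $(x\otimes y)\otimes z$ all equal one fixed element.
\item \textbf{Distributivity.} Axiom (4) only gives $((x\oplus x')\otimes y)\otimes z=((x\otimes y)\otimes z)\oplus((x'\otimes y)\otimes z)$. Passing to binary distributivity again needs cancellation or an identity element. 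This is exactly the obstruction you flag in your last paragraph.
\end{itemize}

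Since the statement assumes associativity and distributivity of $\otimes$ outright, your clause-by-clause check is the complete argument, with clause (1) inherited from axiom (1). The paper's derivations are unnecessary, and as written they are unjustified.

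There is one slip in your optional consistency remark: the three associativity expressions all reduce to $x\otimes y\otimes z\otimes u\otimes v$, not $x\otimes y\otimes u\otimes z\otimes v$.
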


\begin{proof}
Ternary associativity implies
\[
((x\otimes y)\otimes z)\otimes u
=
(x\otimes (y\otimes z))\otimes u.
\]
Hence $\otimes$ is associative.
Distributivity follows from axiom (4).
\end{proof}

Therefore classical idempotent semirings form a subclass.

\subsection{Non-Degenerate Systems}

A ternary idempotent $\Gamma$-semiring is \emph{non-degenerate} if no associative binary operation $\otimes$ exists satisfying
\[
[x,y,z]_\gamma=(x\otimes y)\otimes z.
\]

Such systems, if they exist, extend the semiring paradigm strictly.

The next section constructs one explicitly and proves non-reducibility.

\subsection{Dimensional Extension Principle}

The definition does not depend intrinsically on arity three. 
Let $n\ge 3$. 
An $n$-ary idempotent $\Gamma$-system may be defined analogously by imposing generalized associativity identities.

The ternary case is minimal for which non-reducibility becomes nontrivial. 
Binary systems are fully reducible to themselves. 
Arity three introduces structural complexity absent in dyadic composition.

\subsection{Conceptual Consequence}

Binary semiring theory unifies weighted automata, tropical optimization, and dynamic programming \cite{Mohri2002SemiringShortestDistance, Butkovic2010MaxLinearSystems}. 
The present axioms retain order-theoretic structure while discarding dyadic restriction.

It follows that ternary idempotent $\Gamma$-semirings constitute a genuine algebraic generalization provided non-degenerate examples exist.

The subsequent section establishes precisely this.
\section{Dimensional Generalization}\label{sec:dimensional}

The ternary case is minimal among non-dyadic compositions.
Nothing in the fixed-point argument confines arity to three.
We therefore examine the $k$-ary extension.

\subsection{$k$-Ary Idempotent $\Gamma$-Systems}

Let $k\ge 3$.
A $k$-ary idempotent $\Gamma$-system consists of
\[
(T,\oplus,\{\langle \cdot,\dots,\cdot\rangle_\gamma\}_{\gamma\in\Gamma})
\]
such that:

\begin{enumerate}
\item $(T,\oplus)$ is a complete commutative idempotent semigroup.
\item For each $\gamma$, the map
\[
\langle \cdot,\dots,\cdot\rangle_\gamma:T^k\to T
\]
is associative in the generalized $k$-ary sense.
\item The operation is monotone in each coordinate.
\item Distributivity over $\oplus$ holds coordinatewise.
\end{enumerate}

The associativity identity is defined by invariance under admissible window shifts of size $k$.
This extends the ternary identity of Section~\ref{sec:ttgs}.

Binary semiring multiplication corresponds to $k=2$.
Ternary systems correspond to $k=3$.
The classical case is therefore the smallest member of the hierarchy.

\subsection{Window Graph of Order $k-1$}

For a directed graph $G=(V,E)$, define the window graph
\[
G^{(k-1)}
\]
whose vertices represent length-$(k-1)$ edge sequences.
Edges correspond to length-$k$ consecutive windows.

Then
\[
|V(G^{(k-1)})| = O(m^{k-1})
\]
in worst case.

Binary semiring path algebra operates on $G$.
Ternary systems operate naturally on $G^{(2)}$.
Higher arity lifts the state space further.

This construction parallels lifting procedures in weighted automata theory \cite{DrosteKuich2024TCSUniversalSupport}.
The difference lies in algebraic arity rather than state encoding.

\subsection{Fixed-Point Semantics for General $k$}

Let $\mathcal{L}=T^V$ as before.
Define a relaxation operator
\[
F^{(k)}_\gamma:\mathcal{L}\to\mathcal{L}
\]
by aggregating all length-$(k-1)$ edge windows ending at each vertex.

Monotonicity follows from coordinatewise monotonicity.
Completeness of $\mathcal{L}$ ensures existence of a least fixed point by Knaster--Tarski \cite{BranzeiPhillipsRecker2025DiscMath}.

Thus fixed-point semantics are independent of arity.

\subsection{Expressive Hierarchy}

\begin{proposition}
If a $k$-ary associative operation factors through an associative binary operation, then it induces no algebraic enlargement beyond semiring multiplication.
\end{proposition}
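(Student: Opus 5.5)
The plan is to make the statement precise and then show that every ingredient of the $k$-ary system (composition, associativity, order, distributivity, and path evaluation) is already expressible inside an idempotent semiring. The hypothesis says there is an associative binary $\otimes$ on $T$ such that
\[
\langle x_1,\dots,x_k\rangle_\gamma = (\cdots((x_1\otimes x_2)\otimes x_3)\cdots)\otimes x_k
\]
for all arguments. Since $\otimes$ is associative, the right-hand side is the unambiguous product $x_1\otimes\cdots\otimes x_k$, and I would write it that way. \emph{No algebraic enlargement} will then mean two things. First, $(T,\oplus,\otimes)$ is an idempotent semiring. Second, every value produced by the $k$-ary system (iterated windows, path weights, fixed points of $F^{(k)}_\gamma$) is obtained from semiring operations alone.

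The first step extends the degeneration proposition of Section~\ref{sec:ttgs} from arity three to arity $k$. Associativity of $\otimes$ is given. The issue is distributivity of $\otimes$ over $\oplus$, because axiom (4) only gives it for the $k$-ary map. I would handle this in one of two ways. One option is to assume it explicitly, as the ternary proposition did. The other is to derive it when $T$ has a multiplicative unit $e$: set all coordinates except one equal to $e$, so that coordinatewise distributivity of $\langle\cdot\rangle_\gamma$ becomes $x\otimes(y\oplus y')=x\otimes y\oplus x\otimes y'$, and similarly on the other side. Once distributivity is in hand, the semiring axioms are satisfied.

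The second step shows that all $k$-ary expressions reduce. By induction on the nesting depth, any expression built from consecutive $k$-windows over $w_1,\dots,w_N$ equals $w_1\otimes\cdots\otimes w_N$. At each step a window $\langle w_i,\dots,w_{i+k-1}\rangle_\gamma$ is replaced by the product $w_i\otimes\cdots\otimes w_{i+k-1}$, and binary associativity flattens the result. This agrees with the window independence theorem and shows that $k$-ary path weights are ordinary semiring path weights. The same substitution makes the relaxation operator $F^{(k)}_\gamma$ equal to a composite of semiring relaxations applied to length-$(k-1)$ windows. Its least fixed point is therefore the classical semiring path closure, restricted to paths whose length is compatible with $k$.

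The main obstacle is the first step. Without a unit, or some other way to isolate one coordinate, coordinatewise distributivity of the $k$-ary map does not obviously imply binary distributivity. I would try the unit-based argument first. Failing that, I would state the result under the explicit hypothesis that $\otimes$ distributes over $\oplus$, which matches the hypotheses of the ternary degeneration proposition.
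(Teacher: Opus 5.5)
Your proposal is correct and follows the paper's proof, which consists only of your second step: substitute the factorization and flatten using associativity of $\otimes$, so every $k$-ary fold becomes an ordinary $\otimes$-product of edge weights. Your first step adds something the paper leaves tacit. Coordinatewise distributivity of the $k$-ary map does not force $\otimes$ to distribute over $\oplus$: if all triple products of $\otimes$ equal one element, the $k$-ary map is constant while $\otimes$ itself may fail to distribute. So stating distributivity explicitly, as the ternary degeneration proposition does, is the right repair; and in your unit argument, keep two coordinates free ($x$ and $y\oplus y'$) rather than one.
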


\begin{proof}
If
\[
\langle x_1,\dots,x_k\rangle = (((x_1\otimes x_2)\otimes x_3)\cdots)\otimes x_k
\]
for associative $\otimes$, then path folding reduces to semiring multiplication.
\end{proof}

The separation theorem of Section~\ref{sec:nonred} shows that such factorization need not hold already for $k=3$.
Hence the hierarchy is strict at its first nontrivial level.

\subsection{Structural Consequence}

Binary semiring theory unifies weighted graph algorithms and automata \cite{Mohri2002SemiringShortestDistance}.
Its algebraic interface is dyadic.

Higher-arity systems enlarge this interface without altering the order-theoretic fixed-point mechanism.
Thus dimensional expansion affects composition but not solvability.

The significance is structural.
Arity controls interaction dimension.
Binary systems encode pairwise interaction.
Ternary systems encode triple interaction.
Higher arity encodes longer-range interaction.

The hierarchy is algebraic rather than heuristic.
It arises from associativity constraints and cannot be removed by rewriting unless factorization holds.

\subsection{Minimality of the Ternary Case}

For $k=2$, every associative operation is binary by definition.
For $k=3$, non-reducible systems exist (Theorem~\ref{thm:separation}).
Thus ternary composition is the minimal strictly larger class beyond semiring multiplication.

This justifies focusing on $k=3$ as the first nontrivial extension.
\section{Non-Reducibility and Separation Results}\label{sec:nonred}

This section establishes that ternary associativity does not collapse to binary associativity. 
It provides a non-degenerate instance of Definition~\ref{sec:ttgs} at the level of ternary semigroups. 
The order-theoretic axioms will be imposed later.

\subsection{Ternary Associativity Without Binary Factorization}

\begin{theorem}[Separation]\label{thm:separation}
There exists a set $A$ and a ternary operation $[\cdot,\cdot,\cdot]:A^3\to A$ such that:
\begin{enumerate}
\item $[\cdot,\cdot,\cdot]$ is ternary associative, i.e.
\[
[[x,y,z],u,v]=[x,[y,z,u],v]=[x,y,[z,u,v]]
\quad \forall x,y,z,u,v\in A,
\]
\item there is no associative binary operation $\otimes:A^2\to A$ for which
\[
[x,y,z]=(x\otimes y)\otimes z
\quad \forall x,y,z\in A.
\]
\end{enumerate}
\end{theorem}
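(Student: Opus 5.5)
The plan is to take the \emph{heap} operation on a small cyclic group and show by a direct algebraic argument that no associative binary factorization exists. Concretely, let $A=\mathbb{Z}_3$ and define
\[
[x,y,z]=x-y+z \pmod 3 .
\]
Ternary associativity (item~1) is a one-line check. All three bracketings $[[x,y,z],u,v]$, $[x,[y,z,u],v]$ and $[x,y,[z,u,v]]$ expand to $x-y+z-u+v$. The middle one does so because the two minus signs cancel: $x-(y-z+u)+v=x-y+z-u+v$. The modulus $3$ matters. Over $\mathbb{Z}_2$ we have $x-y+z=x+y+z=(x+y)+z$, which does factor, so the example must use a group in which $-1\neq 1$.

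For item~2, I would suppose an associative $\otimes$ on $A$ satisfies $(x\otimes y)\otimes z=x-y+z$, and argue in the following order.
\begin{enumerate}
\item For fixed $x,y$, the left translation $z\mapsto (x\otimes y)\otimes z$ is the shift $z\mapsto z+(x-y)$. Hence pairs $(x,y)$ with different differences $x-y$ yield different elements $x\otimes y$. Since three differences occur, $\otimes$ takes at least three values and is therefore surjective onto $A$.
\item Consequently every $c\in A$ has the form $c=x\otimes y$, and we may define $\varphi(c)=x-y$. This is well defined, because $c\otimes z=z+\varphi(c)$ determines $\varphi(c)$. It is also a bijection $A\to\mathbb{Z}_3$, and it satisfies $c\otimes z=z+\varphi(c)$ for all $c,z$.
\item Associativity $(c\otimes d)\otimes z=c\otimes(d\otimes z)$ becomes
\[
\varphi\bigl(d+\varphi(c)\bigr)=\varphi(d)+\varphi(c).
\]
The defining relation $(x\otimes y)\otimes z=x-y+z$ becomes
\[
\varphi\bigl(y+\varphi(x)\bigr)=x-y.
\]
\item Combining the two displays gives $\varphi(x)+\varphi(y)=x-y$ for all $x,y$. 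Setting $x=y$ yields $2\varphi(x)=0$, so $\varphi\equiv 0$ in $\mathbb{Z}_3$. This contradicts the bijectivity of $\varphi$ established in step~2.
\end{enumerate}

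The main obstacle is step~2, namely turning an arbitrary unknown $\otimes$ into the explicit normal form $c\otimes z=z+\varphi(c)$. This is where the finiteness of $A$ and the counting of differences are essential, since they give surjectivity. Once that normal form is in hand, the remainder is a short functional-equation computation. If the counting argument turned out to be delicate, I would fall back on a finite exhaustive check. There are only $3^9$ binary operations on a $3$-element set, and the constraints $(x\otimes x)\otimes z=z$ and $(x\otimes y)\otimes y=x$ prune them drastically. The structural argument above should, however, suffice.
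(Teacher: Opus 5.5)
Your proof is correct, but it follows a different route from the paper's.

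\textbf{The paper's route.} The paper stays on $A=\{0,1\}$ and uses the twisted bracket $[x,y,z]=1\oplus x\oplus y\oplus z$ over $\mathbb{F}_2$, which is the $\mathbb{Z}_2$ heap you set aside, shifted by a constant. It then runs a two-case check on $a=0\otimes 0$. The relations $a\otimes 0=1$ and $a\otimes 1=0$ rule out $a=0$ at once. For $a=1$ they force $1\otimes 0=1$, contradicting $[1,0,0]=0$. That argument never uses associativity of $\otimes$, so it shows that \emph{no} binary operation at all factors the paper's bracket.

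\textbf{Your route.} Your $\mathbb{Z}_3$ heap behaves differently, since $x\otimes y=k+y-x$ is a non-associative factorization of it. So associativity is genuinely needed in your proof. It enters exactly through the normal form $c\otimes z=z+\varphi(c)$ and the identity $\varphi(x)+\varphi(y)=x-y$.

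In exchange, your argument is structural and generalizes. Once one $c$ satisfies $c\otimes z=z+\varphi(c)$, the elements $c\otimes d=d+\varphi(c)$ already exhaust $A$. So the counting step and finiteness, which you call essential, are not actually needed. Swapping $x$ and $y$ in your identity gives $2(x-y)=0$. Hence the heap $x-y+z$ of any abelian group containing an element of order greater than $2$ has no associative factorization. For groups of exponent $2$ it factors as $(x+y)+z$, just as you observed for $\mathbb{Z}_2$.

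\textbf{Comparison.} The paper's route buys a smaller carrier and a stronger non-factorization statement. Yours shows that the obstruction comes from associativity together with $-1\neq 1$.
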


\begin{proof}
Let $A=\{0,1\}$. 
Define
\[
[x,y,z] \;=\; 1 \oplus x \oplus y \oplus z,
\]
where $\oplus$ is addition in $\mathbb{F}_2$. 
Equivalently, $[x,y,z]$ is the Boolean complement of the XOR of $(x,y,z)$.

\smallskip
\noindent\emph{Step 1: ternary associativity.}
For any $x,y,z,u,v\in A$,
\[
\begin{aligned}
[[x,y,z],u,v]
&= 1 \oplus [x,y,z]\oplus u\oplus v\\
&= 1 \oplus (1\oplus x\oplus y\oplus z)\oplus u\oplus v\\
&= x\oplus y\oplus z\oplus u\oplus v.
\end{aligned}
\]
Similarly,
\[
\begin{aligned}
[x,[y,z,u],v]
&= 1\oplus x\oplus [y,z,u]\oplus v\\
&= 1\oplus x\oplus (1\oplus y\oplus z\oplus u)\oplus v\\
&= x\oplus y\oplus z\oplus u\oplus v,
\end{aligned}
\]
and
\[
\begin{aligned}
[x,y,[z,u,v]]
&= 1\oplus x\oplus y\oplus [z,u,v]\\
&= 1\oplus x\oplus y\oplus (1\oplus z\oplus u\oplus v)\\
&= x\oplus y\oplus z\oplus u\oplus v.
\end{aligned}
\]
Hence the three expressions coincide for all inputs. 
Thus $[\cdot,\cdot,\cdot]$ is ternary associative.

\smallskip
\noindent\emph{Step 2: non-factorization through an associative binary operation.}
Assume for contradiction that there exists an associative $\otimes$ on $A$ such that
\[
[x,y,z]=(x\otimes y)\otimes z
\quad \forall x,y,z\in A.
\]
Let
\[
a:=0\otimes 0,\quad b:=0\otimes 1,\quad c:=1\otimes 0,\quad d:=1\otimes 1.
\]
All lie in $A$.

From $[0,0,0]=1$ we obtain
\[
(0\otimes 0)\otimes 0 = a\otimes 0 = 1.
\]
From $[0,0,1]=0$ we obtain
\[
(0\otimes 0)\otimes 1 = a\otimes 1 = 0.
\]
Thus $a\otimes 0=1$ and $a\otimes 1=0$.

Since $a\in\{0,1\}$, we consider cases.

\smallskip
\noindent\emph{Case 1: $a=0$.}
Then $a\otimes 0=0\otimes 0=a=0$, contradicting $a\otimes 0=1$.

\smallskip
\noindent\emph{Case 2: $a=1$.}
Then $0\otimes 0=1$. 
The identity $(0\otimes 0)\otimes 0=1$ becomes
\[
1\otimes 0 = 1,
\]
so $c=1$. 
Now evaluate $[1,0,0]$. 
By definition,
\[
[1,0,0]=1\oplus 1\oplus 0\oplus 0 = 0.
\]
On the other hand, the assumed factorization gives
\[
[1,0,0] = (1\otimes 0)\otimes 0 = c\otimes 0 = 1\otimes 0 = c = 1,
\]
contradiction.

\smallskip
Both cases are impossible. 
Therefore no associative binary $\otimes$ can satisfy $[x,y,z]=(x\otimes y)\otimes z$ for all triples.

This completes the proof.
\end{proof}

\subsection{Consequences for Path Algebras}

Theorem~\ref{thm:separation} shows that ternary associativity is strictly stronger than iterated binary associativity. 
Hence the degeneration statement in Section~\ref{sec:ttgs} is strict.

\begin{corollary}\label{cor:proper}
The class of ternary associative path compositions strictly contains the class induced by associative binary semiring multiplications.
\end{corollary}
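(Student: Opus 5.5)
The statement is Corollary~\ref{cor:proper}, and the plan is to prove it as two set-theoretic facts: containment and properness. I take ``the class of ternary associative path compositions'' to mean all ternary operations $[\cdot,\cdot,\cdot]$ on a carrier set that satisfy the ternary associativity identity. I take ``the class induced by associative binary semiring multiplications'' to mean those ternary operations of the form $[x,y,z]=(x\otimes y)\otimes z$ with $\otimes$ associative.

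\emph{Containment.} Let $\otimes$ be associative and set $[x,y,z]:=(x\otimes y)\otimes z$. Each of the three bracketings $[[x,y,z],u,v]$, $[x,[y,z,u],v]$ and $[x,y,[z,u,v]]$ unfolds into a product of the five letters $x,y,z,u,v$ in that order under some parenthesization. By binary associativity, all such products equal $x\otimes y\otimes z\otimes u\otimes v$. So every induced operation is ternary associative. This is the converse direction to the degeneration proposition of Section~\ref{sec:ttgs}. If the paper intends the order-theoretic setting, the same computation shows that monotonicity and distributivity pass from $\otimes$ to the induced ternary operation coordinatewise.

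\emph{Properness.} Here I invoke Theorem~\ref{thm:separation}. On $A=\{0,1\}$, the operation $[x,y,z]=1\oplus x\oplus y\oplus z$ over $\mathbb{F}_2$ is ternary associative. It admits no associative $\otimes$ with $[x,y,z]=(x\otimes y)\otimes z$. So it lies in the larger class but not in the induced class, and the inclusion is strict.

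\emph{Main obstacle.} The delicate step is interpretive rather than computational: deciding whether ``path compositions'' must also carry the idempotent-order axioms of Section~\ref{sec:ttgs}. The separating example uses $\mathbb{F}_2$-addition, which is not idempotent. If idempotent structure is required, the witness must be upgraded. The first thing I would try is to keep the statement at the level of ternary semigroups, as the remark opening Section~\ref{sec:nonred} explicitly allows, and to note that the order axioms are imposed later. Failing that, I would look for a separating example inside an idempotent carrier, for instance a distributive lattice with a suitable ternary operation, and redo the case analysis of Step~2 of Theorem~\ref{thm:separation} for it.
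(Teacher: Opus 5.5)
Your proposal is correct and takes the paper's route: the paper's proof is exactly your properness step, a one-line reduction to Theorem~\ref{thm:separation} read at the ternary-semigroup level stipulated at the opening of Section~\ref{sec:nonred}, and your containment check supplies the direction the paper leaves implicit. Your caveat is also well founded: the $\mathbb{F}_2$ witness is not monotone in any coordinate for either of the two semilattice orders on $\{0,1\}$ (e.g.\ $[0,0,0]=1$ but $[1,0,0]=0$), so it does not certify strictness once the idempotent-order axioms are imposed.
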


\begin{proof}
If every ternary associative operation were induced by a binary associative operation, Theorem~\ref{thm:separation} would be false. 
\end{proof}

This separation theorem is compatible with the independent development of associative triple systems \cite{Bremner2025OperatorIdentitiesTripleSystems, Felipe2025AssociativeTripleTrisystems}. 
It implies that the dyadic semiring framework \cite{Mohri2002SemiringShortestDistance} is not algebraically maximal among associative path compositions.

\subsection{Placement Within the TTGS Program}

The construction above is purely algebraic. 
It makes no reference to order, monotonicity, or distributivity. 
This is deliberate. 
Non-reducibility is already present at the level of associative triple systems. 
Any ordered enhancement that preserves ternary associativity cannot remove this obstruction without reintroducing binary factorization as an axiom.

In the subsequent sections we impose the idempotent order structure and construct path relaxation operators. 
The separation result remains the structural justification for doing so.
\section{Higher-Order Path Problems}\label{sec:path}

We fix a directed graph $G=(V,E)$. 
We assume a weight map
\[
w:E\to T,
\]
where $T$ is the carrier of a ternary idempotent $\Gamma$-semiring as in Section~\ref{sec:ttgs}.
The operation $\oplus$ induces the canonical order $\le$.

\subsection{Ternary Window Composition on Paths}

A directed path is a finite sequence $P=(v_0,v_1,\dots,v_k)$ with $(v_{i-1},v_i)\in E$ for $i=1,\dots,k$.
Write
\[
w_i := w(v_{i-1},v_i)\in T.
\]

Binary semiring path algebra assigns the path weight
\[
w(P)=w_1\otimes\cdots\otimes w_k,
\]
and the global optimum is
\[
\bigoplus_{P} w(P),
\]
as in the semiring framework \cite{Mohri2002SemiringShortestDistance}.
This construction is dyadic.

We replace it by a sliding ternary scheme.

\begin{definition}[Ternary window fold]
Fix $\gamma\in\Gamma$.
For $k\ge 3$, define a fold map
\[
\mathrm{Fold}_\gamma(w_1,\dots,w_k)\in T
\]
as follows:
\begin{enumerate}
\item For $k=3$,
\[
\mathrm{Fold}_\gamma(w_1,w_2,w_3) := [w_1,w_2,w_3]_\gamma.
\]
\item For $k>3$, define recursively
\[
\mathrm{Fold}_\gamma(w_1,\dots,w_k)
:=
\mathrm{Fold}_\gamma\big(\mathrm{Fold}_\gamma(w_1,\dots,w_{k-1}),\, w_{k-1},\, w_k\big).
\]
\end{enumerate}
\end{definition}

This definition selects the last two weights as the active window together with the folded prefix. 
It is a ternary analogue of left-associative multiplication.

The form is not unique. 
Window placement may be changed. 
The next result shows that, under ternary associativity, the value is invariant under admissible window shifts.

\subsection{Window Invariance}

We formalize what is meant by admissible window placement.

\begin{definition}[Admissible ternary parenthesization]
An admissible ternary parenthesization of $(w_1,\dots,w_k)$ is any expression obtained by iterated substitution of a consecutive triple $(x,y,z)$ by $[x,y,z]_\gamma$, repeated until a single element of $T$ remains.
\end{definition}

\begin{theorem}[Invariance under window shifts]\label{thm:window-invariance}
Let $[\cdot,\cdot,\cdot]_\gamma$ be ternary associative.
Then for any $k\ge 3$, every admissible ternary parenthesization of $(w_1,\dots,w_k)$ evaluates to the same element of $T$.
\end{theorem}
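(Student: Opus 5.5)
The plan is to fix a canonical \emph{left normal form} and show, by induction on $k$, that every admissible ternary parenthesization equals it. Each substitution step reduces the length of the word by $2$, so admissible parenthesizations exist only for odd $k$; for even $k$ the statement is vacuous, and we assume $k=2r+1$. Define
\[
L(w_1)=w_1,\qquad L(w_1,\dots,w_{2r+1}) = \big[L(w_1,\dots,w_{2r-1}),\,w_{2r},\,w_{2r+1}\big]_\gamma .
\]
This agrees with $\mathrm{Fold}_\gamma$ on odd-length inputs, up to the way the recursion is written. We then prove, by strong induction on $k$, that every admissible parenthesization $E$ of $(w_1,\dots,w_k)$ satisfies $E=L(w_1,\dots,w_k)$. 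The cases $k=1$ and $k=3$ are immediate.

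\textbf{Decomposition.} The first step is to read any admissible parenthesization as a tree. The last substitution performed produces an expression $[A,B,C]_\gamma$. Tracing back the substitutions, $A$, $B$, $C$ are admissible parenthesizations, or single letters, of three consecutive blocks $(w_1,\dots,w_p)$, $(w_{p+1},\dots,w_q)$, $(w_{q+1},\dots,w_k)$, each of odd length and each shorter than $k$. By the induction hypothesis we may replace each of $A,B,C$ by the left normal form of its block. It then remains to prove the following claim: for odd-length blocks $X,Y,Z$ whose concatenation is $(w_1,\dots,w_k)$,
\[
[L(X),L(Y),L(Z)]_\gamma = L(X\,Y\,Z).
\]

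\textbf{Rotation argument.} We prove the claim by a secondary induction on $|Y|+|Z|$. If $|Y|=|Z|=1$, the expression is $L(XYZ)$ by definition. If $|Z|\ge 3$, write $L(Z)=[L(Z'),z',z'']_\gamma$ with $Z=Z'z'z''$. If $|Z'|=1$, we apply the identity $[x,y,[z,u,v]]=[[x,y,z],u,v]$ directly. Otherwise we first use the outer induction, since $|X|+|Y|+|Z'|<k$, to rewrite $[L(X),L(Y),L(Z')]$ as a single left normal form. Then we use ternary associativity to pull the trailing window $z',z''$ to the outside, which decreases $|Y|+|Z|$. If $|Z|=1$ and $|Y|\ge3$, we proceed similarly, using $[x,[y,z,u],v]=[[x,y,z],u,v]$ to move material from the middle block into the first block.

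\textbf{Main obstacle.} The main obstacle is the bookkeeping in this rotation step. We must check that each intermediate expression is again an admissible parenthesization of strictly shorter blocks, so that one of the two inductions legitimately applies. We must also order the lexicographic measure $(k,\,|Y|+|Z|)$ so that the argument does not become circular. To handle this, I would first isolate two auxiliary identities, each proved by induction on $m$ using only the two associativity equalities:
\[
[x,y,L(z_1,\dots,z_m)]_\gamma = L(x,y,z_1,\dots,z_m),\qquad
[x,L(y_1,\dots,y_m),v]_\gamma = L(x,y_1,\dots,y_m,v).
\]
The claim for general blocks then follows by applying these identities with $x=L(X)$ treated as a single letter.
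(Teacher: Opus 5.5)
Your proof is correct, and it follows a different route from the paper's. The paper argues by pairwise connectivity. It asserts that any two reduction sequences can be transformed into one another by commuting independent contractions and applying the five-letter move $[[x,y,z]_\gamma,u,v]_\gamma\leftrightarrow[x,[y,z,u]_\gamma,v]_\gamma\leftrightarrow[x,y,[z,u,v]_\gamma]_\gamma$ wherever contractions are nested, and it finishes with an unspecified induction on $k$. You instead fix the left comb $L$ as a canonical representative, split an arbitrary parenthesization at its root into three odd-length blocks, and absorb $[L(X),L(Y),L(Z)]_\gamma$ into $L(XYZ)$. Your two comb identities each follow by induction on $m$ from a single associativity equality: the first from $[x,y,[z,u,v]]=[[x,y,z],u,v]$, the second from $[x,[y,z,u],v]=[[x,y,z],u,v]$. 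The normal-form route gives a complete argument. Commuting independent contractions changes nothing at the level of expressions, so the real content of the paper's sketch is the unproved claim that any two trees are linked by local moves. Your induction is a proof of exactly that claim, and unwinding it gives an explicit chain of moves to the left comb. Your observation that admissible parenthesizations exist only for odd $k$ is correct, and it makes the even case of the statement vacuous.

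Two small points. First, the secondary induction on $|Y|+|Z|$ and the worry about circularity are unnecessary. With the outer hypothesis available for all shorter words, the case $|Z|\ge 3$ closes directly: $[L(X),L(Y),[L(Z'),z',z'']_\gamma]_\gamma=[[L(X),L(Y),L(Z')]_\gamma,z',z'']_\gamma=[L(XYZ'),z',z'']_\gamma=L(XYZ)$. The case $|Z|=1$, $|Y|\ge 3$ closes the same way via $[x,[y,z,u],v]=[[x,y,z],u,v]$. Alternatively, the two comb identities alone suffice, provided you record the unfolding fact $L(L(U),W)=L(UW)$ for even-length $W$; you use it implicitly when treating $L(X)$ and $L(Y)$ as single letters. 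Second, your aside that $L$ agrees with $\mathrm{Fold}_\gamma$ is false. The paper's recursion reuses $w_{k-1}$, so for instance $\mathrm{Fold}_\gamma(w_1,\dots,w_5)=[[[w_1,w_2,w_3]_\gamma,w_3,w_4]_\gamma,w_4,w_5]_\gamma$, which is not an admissible parenthesization at all. Your proof never uses $\mathrm{Fold}_\gamma$, so this does not affect the theorem.
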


\begin{proof}
It suffices to show that any two admissible parenthesizations are connected by a finite sequence of local moves, each preserving value.

The defining associativity identity gives the local move
\[
[[x,y,z]_\gamma,u,v]_\gamma \longleftrightarrow [x,[y,z,u]_\gamma,v]_\gamma,
\]
and similarly
\[
[x,[y,z,u]_\gamma,v]_\gamma \longleftrightarrow [x,y,[z,u,v]_\gamma]_\gamma.
\]
Each move replaces one window placement by another in a length-$5$ block and preserves the value by axiom (2) of Section~\ref{sec:ttgs}.

Any admissible parenthesization reduces $(w_1,\dots,w_k)$ by successive contractions of consecutive triples.
Two such reduction sequences can be rearranged by commuting independent contractions and applying the local move above whenever two contractions overlap.
A finite induction on $k$ completes the argument.
\end{proof}

Thus the ternary fold is well defined independently of window placement.

\subsection{Ternary Path Cost}

\begin{definition}[Ternary path cost]\label{def:ternary-path-cost}
Fix $\gamma\in\Gamma$.
For a path $P=(v_0,\dots,v_k)$ with $k\ge 3$, define
\[
C_\gamma(P) := \mathrm{Fold}_\gamma(w_1,\dots,w_k).
\]
For $k<3$ one may either restrict attention to $k\ge 3$ or impose boundary conventions. 
We will treat $k\ge 3$.
\end{definition}

The global ternary path value from $s$ to $t$ is
\[
\mathrm{Opt}_\gamma(s,t) := \bigoplus_{P:s\leadsto t} C_\gamma(P),
\]
where the join is taken over all directed paths $P$ from $s$ to $t$.

\subsection{Dyadic Degeneration and Link to Semiring Path Algebra}

Assume there exists an associative binary $\otimes$ such that
\[
[x,y,z]_\gamma = (x\otimes y)\otimes z
\quad \forall x,y,z.
\]
Then by repeated substitution,
\[
C_\gamma(P) = w_1\otimes\cdots\otimes w_k.
\]
Hence
\[
\mathrm{Opt}_\gamma(s,t)
=
\bigoplus_{P:s\leadsto t} \big(w_1\otimes\cdots\otimes w_k\big),
\]
which is exactly the semiring shortest-path expression of \cite{Mohri2002SemiringShortestDistance}.
Thus the present framework contains the classical semiring path framework as a special case.

The separation theorem of Section~\ref{sec:nonred} shows that this degeneration is not forced by ternary associativity.
Therefore ternary path costs represent strictly more general compositional behaviour than dyadic semiring multiplication.

\subsection{Locality and Structural Meaning}

Binary semiring multiplication composes adjacent edges. 
The ternary operation composes length-$3$ windows. 
This changes the locality of interaction.

If a weight encodes a local constraint, then $[w_{i-2},w_{i-1},w_i]_\gamma$ encodes a constraint involving three consecutive edges.
Such constraints are invisible to dyadic composition unless they factor through a binary operation.
Non-reducibility shows that factorization can fail.

This is the structural reason to consider higher-order path problems.
\section{Fixed-Point Characterization}\label{sec:fixedpoint}

We formulate the higher-order path problem as a lattice-theoretic fixed-point equation.
Only order and monotonicity are used.

\subsection{State Space}

Let $G=(V,E)$ be a finite directed graph.
Let $T$ be the carrier of a ternary idempotent $\Gamma$-semiring.
Assume $(T,\oplus)$ is complete.

Define
\[
\mathcal{L} := T^V.
\]

Equip $\mathcal{L}$ with the pointwise order:
\[
f \le g \iff f(v) \le g(v) \quad \forall v\in V.
\]

\begin{proposition}
$(\mathcal{L},\le)$ is a complete lattice.
\end{proposition}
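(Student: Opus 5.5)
We show that $(\mathcal{L},\le)$ is a complete lattice by transferring completeness from $(T,\le)$ coordinatewise. The paper's canonical order is $a\le b \iff a\oplus b=a$, so $\oplus$ is the meet. The hypothesis that $(T,\oplus)$ is complete then says that every family $\{x_i\}_{i\in I}\subseteq T$ has an infimum $\bigoplus_i x_i$.

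The first step is to recall the standard fact that a poset in which every subset has an infimum is a complete lattice. For $S\subseteq T$, let $U(S)$ be the set of upper bounds of $S$ and put
\[
\sup S := \bigoplus_{u\in U(S)} u .
\]
Every $s\in S$ is a lower bound of $U(S)$, so $s\le \sup S$. Every upper bound of $S$ lies in $U(S)$, so $\sup S$ lies below each of them. Hence $\sup S$ is the least upper bound. The case $S=\emptyset$ is covered as well: $U(\emptyset)=T$, and $\sup\emptyset = \bigoplus T$ is the bottom element. So $(T,\le)$ is a complete lattice.

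The second step is the pointwise construction. For a family $\{f_i\}_{i\in I}\subseteq\mathcal{L}$, define
\[
\Big(\bigwedge_i f_i\Big)(v) := \bigoplus_i f_i(v), \qquad \Big(\bigvee_i f_i\Big)(v) := \sup_i f_i(v).
\]
These functions belong to $\mathcal{L}=T^V$ because each value lies in $T$. The order on $\mathcal{L}$ is defined pointwise. Thus $g\le f_i$ for all $i$ holds exactly when $g(v)\le f_i(v)$ for all $i$ and all $v$, which holds exactly when $g(v)\le \bigoplus_i f_i(v)$ for every $v$. So the pointwise meet is the infimum in $\mathcal{L}$, and the dual argument shows the pointwise supremum is the supremum. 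Reflexivity, antisymmetry and transitivity of the pointwise order follow coordinatewise from the Proposition in Section~\ref{sec:preliminaries} that $(T,\le)$ is a partially ordered set. Finiteness of $V$ is not needed; it matters only later, for convergence.

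The main point that needs care is the meaning of ``complete'' for $(T,\oplus)$. We read it as the existence of arbitrary infima, including the empty one, which is the top element. This matches the sentence ``If arbitrary infima exist, it is complete'' in Section~\ref{sec:preliminaries}. If only nonempty infima were intended, we would need a top element of $T$ to obtain suprema of unbounded sets, and we would state that assumption explicitly. With the intended reading, everything else is routine coordinatewise verification.
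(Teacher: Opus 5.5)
Your proof is correct and follows the same route as the paper, whose one-line proof derives completeness of $\mathcal{L}$ from completeness of $T$ via pointwise infima. You add the details the paper leaves implicit: arbitrary infima, including the empty one that gives $\top_T$, yield arbitrary suprema, and the pointwise order inherits the partial-order axioms coordinatewise.
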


\begin{proof}
Completeness follows from completeness of $T$ and pointwise infima.
\end{proof}

This construction is standard in semiring path analysis \cite{Mohri2002SemiringShortestDistance}.
The order-theoretic framework underlies fixed-point arguments in weighted systems \cite{BranzeiPhillipsRecker2025DiscMath}.

\subsection{Ternary Relaxation Operator}

Fix a source vertex $s\in V$.
Let $e_s\in\mathcal{L}$ be defined by
\[
e_s(v)=
\begin{cases}
0_T & v=s,\\
\top_T & v\ne s,
\end{cases}
\]
where $0_T$ is the minimal element of $T$ and $\top_T$ the maximal element.

Define an operator
\[
F_\gamma:\mathcal{L}\to\mathcal{L}
\]
by
\[
(F_\gamma f)(v)
=
\begin{cases}
0_T & v=s,\\
\displaystyle
\bigoplus_{\substack{(u_1,u_2,v)\in E^{(2)}}}
[f(u_1),\, w(u_1,u_2),\, w(u_2,v)]_\gamma
& v\ne s,
\end{cases}
\]
where $E^{(2)}$ denotes all length-$2$ edge pairs forming a directed length-$2$ path.

Thus $F_\gamma$ updates $v$ by aggregating all ternary windows ending at $v$.

\subsection{Monotonicity}

\begin{theorem}
$F_\gamma$ is monotone on $(\mathcal{L},\le)$.
\end{theorem}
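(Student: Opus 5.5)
The plan is to check monotonicity coordinate by coordinate, using only that the canonical order is pointwise on $\mathcal{L}$. Fix $f\le g$ in $\mathcal{L}$, i.e.\ $f(u)\le g(u)$ in $T$ for every $u\in V$. I will show $(F_\gamma f)(v)\le (F_\gamma g)(v)$ for each $v\in V$, splitting into the cases $v=s$ and $v\ne s$.

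\emph{Case $v=s$.} Both sides equal $0_T$, and reflexivity of $\le$ settles it.

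\emph{Case $v\ne s$.} I will argue termwise and then aggregate.
\begin{enumerate}
\item For each window $(u_1,u_2,v)\in E^{(2)}$, the edge weights $w(u_1,u_2)$ and $w(u_2,v)$ are independent of $f$. The earlier proposition on order compatibility (the map $x\mapsto [x,u,v]_\gamma$ is monotone, from axiom (3)) then gives
\[
[f(u_1),w(u_1,u_2),w(u_2,v)]_\gamma\le [g(u_1),w(u_1,u_2),w(u_2,v)]_\gamma .
\]
\item Next I need the lemma that $\oplus$ preserves the canonical order under arbitrary (complete) sums: if $a_i\le b_i$ for all $i$, then $\bigoplus_i a_i\le\bigoplus_i b_i$.
\item For finitely many terms this is direct. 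From $a_i\oplus b_i=a_i$ and commutativity, associativity and idempotency,
\[
\Big(\bigoplus a_i\Big)\oplus\Big(\bigoplus b_i\Big)=\bigoplus (a_i\oplus b_i)=\bigoplus a_i .
\]
\item Since $G$ is finite, $E^{(2)}$ is finite, so the finite version suffices. The degenerate empty index set is handled by whatever convention defines the empty sum (presumably $\top_T$), which is the same on both sides.
\end{enumerate}
Combining the termwise inequalities with this lemma yields the desired inequality at $v$.

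The only step needing care is the aggregation lemma, since $\oplus$ is read as a meet in the canonical order ($a\le b\iff a\oplus b=a$). I will verify that the direction of the inequality is preserved, which the computation in step 3 does without appealing to completeness. With both cases established, $F_\gamma f\le F_\gamma g$ holds pointwise, hence in $\mathcal{L}$. This monotonicity then feeds into Knaster--Tarski on the complete lattice $\mathcal{L}$ to give the least fixed point.
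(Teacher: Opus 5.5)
Your proposal is correct and follows the paper's proof: a termwise inequality from monotonicity of $[\cdot,\cdot,\cdot]_\gamma$ in its first coordinate, then aggregation over the windows in $E^{(2)}$ ending at $v$. Beyond what the paper writes, you treat the case $v=s$ separately, prove the finite-sum lemma that $\oplus$ preserves $\le$ (the paper only asserts that ``taking $\oplus$ over all triples preserves order''), and handle the empty-window case, all correctly.
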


\begin{proof}
Let $f\le g$.
Then for each triple $(u_1,u_2,v)$,
\[
[f(u_1), w(u_1,u_2), w(u_2,v)]_\gamma
\le
[g(u_1), w(u_1,u_2), w(u_2,v)]_\gamma
\]
by monotonicity of $[\cdot,\cdot,\cdot]_\gamma$ in its first coordinate.
Taking $\oplus$ over all triples preserves order.
Hence $F_\gamma f \le F_\gamma g$.
\end{proof}

Monotonicity depends solely on order compatibility.
Binary multiplication is not used.
Similar arguments appear in semiring relaxation schemes \cite{Mohri2002SemiringShortestDistance}.

\subsection{Least Fixed Point}

\begin{theorem}\label{thm:lfp}
$F_\gamma$ admits a least fixed point in $\mathcal{L}$.
\end{theorem}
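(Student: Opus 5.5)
The plan is to apply the Knaster--Tarski theorem stated in the Preliminaries directly to $F_\gamma$ on $\mathcal{L}=T^V$. Two hypotheses have to be checked: that $\mathcal{L}$ is a complete lattice, and that $F_\gamma$ is a well-defined monotone self-map of $\mathcal{L}$. Completeness of $\mathcal{L}$ is the proposition proved at the start of this section, since infima are computed pointwise in the complete semilattice $(T,\oplus)$. Monotonicity is the preceding theorem.

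The first step is to confirm that $F_\gamma$ really maps $\mathcal{L}$ into $\mathcal{L}$.

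\begin{itemize}
\item At $v=s$ the value is the constant $0_T\in T$.
\item At $v\neq s$ the value is a $\oplus$-aggregate over the set of length-$2$ paths ending at $v$. Since $G$ is finite this index set is finite, so only finitely many applications of $\oplus$ are needed; completeness of $T$ is not even required here.
\item If no length-$2$ path ends at $v$, the aggregate is empty. I would interpret the empty $\oplus$ as the neutral element for $\oplus$, which is $\top_T$ in the canonical order because $a\oplus\top_T=a$. Its existence follows from completeness of $(T,\oplus)$.
\item Each summand $[f(u_1),w(u_1,u_2),w(u_2,v)]_\gamma$ lies in $T$ because the ternary operation is a map $T^3\to T$.
\end{itemize}

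Next I would recall why monotonicity holds under this convention. The canonical order satisfies $a\le b$ iff $a\oplus b=a$, so $\oplus$ is the meet. If $a_i\le b_i$ for every $i$, then $\bigoplus a_i\le\bigoplus b_i$, because meets are monotone in each argument. The component at $s$ is constant, hence trivially monotone. Combined with axiom (3) in the first coordinate, as in the preceding theorem, this gives $F_\gamma f\le F_\gamma g$ whenever $f\le g$.

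Finally, Knaster--Tarski gives a least fixed point. If the paper's version is only stated for existence, I would record the explicit description
\[
\mu F_\gamma=\bigwedge\{f\in\mathcal{L}: F_\gamma f\le f\},
\]
the meet of all pre-fixed points in the order $\le$.

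The main obstacle is conventional rather than technical: orientation of the order. Here $\oplus$ is the meet, so $\top_T$ is the neutral element for $\oplus$, while the paper's "$0_T$ minimal element" is the bottom. In the induced order the "least" fixed point is the meet of the pre-fixed points. Knaster--Tarski is symmetric, so it yields both least and greatest fixed points, and either orientation works. I would simply state which is meant and note that existence holds regardless. No use of associativity or distributivity is needed; those become relevant only for the later convergence and path-interpretation results.
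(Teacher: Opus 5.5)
Your proposal is correct and is essentially the paper's proof: the paper likewise combines completeness of $(\mathcal{L},\le)$ (obtained pointwise from $T$) with the preceding monotonicity theorem and then invokes Knaster--Tarski. Your extra checks are sound refinements that the paper leaves implicit: that $F_\gamma$ is a well-defined self-map, that an empty aggregate is read as $\top_T$ (the neutral element of $\oplus$), and the remark on which orientation of the order is meant by ``least''.
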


\begin{proof}
Since $(\mathcal{L},\le)$ is complete and $F_\gamma$ is monotone,
the Knaster--Tarski theorem implies existence of a least fixed point \cite{BranzeiPhillipsRecker2025DiscMath}.
\end{proof}

Denote this fixed point by $f_\gamma^\ast$.

\subsection{Path Interpretation}

\begin{proposition}
For each $v\in V$,
\[
f_\gamma^\ast(v)
=
\bigoplus_{P:s\leadsto v} C_\gamma(P),
\]
where $C_\gamma(P)$ is the ternary path cost defined in Section~\ref{sec:path}.
\end{proposition}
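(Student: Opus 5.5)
The plan is a Kleene-style iteration combined with an induction on path length, which identifies the iterates of $F_\gamma$ with partial path sums. First I fix the order conventions. Under $a\le b\iff a\oplus b=a$, the operation $\oplus$ is the \emph{meet}, so $\bigoplus_P C_\gamma(P)$ is an infimum. The natural iteration therefore starts from $e_s$, with $\top_T$ off the source, and descends:
\[
f_0:=e_s,\qquad f_{n+1}:=F_\gamma f_n .
\]
I will check that $F_\gamma e_s\le e_s$. Monotonicity of $F_\gamma$ (the theorem above) then makes $(f_n)$ a descending chain. Its infimum $f_\omega=\bigoplus_n f_n$ exists by completeness of $\mathcal{L}$.

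The core step is an induction on $n$. I claim that for $v\ne s$, $f_n(v)$ equals the $\oplus$ of $C_\gamma(P)$ over all $s\leadsto v$ paths built from at most $n$ two-edge windows, together with $\top_T$. In the inductive step I expand
\[
F_\gamma f_n(v)=\bigoplus_{(u_1,u_2,v)}[f_n(u_1),w(u_1,u_2),w(u_2,v)]_\gamma .
\]
I then push the inner $\oplus$ (the value of $f_n(u_1)$) out of the first coordinate using the distributivity proposition of Section~\ref{sec:ttgs}, which holds for arbitrary families by completeness. This produces terms $[C_\gamma(P'),w(u_1,u_2),w(u_2,v)]_\gamma$ for paths $P'$ ending at $u_1$. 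I must identify each such term with $C_\gamma(P'\cdot u_2\cdot v)$. Here I invoke Theorem~\ref{thm:window-invariance}: appending a two-edge window to an already folded prefix is an admissible ternary parenthesization of the extended edge sequence, so its value is independent of window placement.

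Passing to the limit, $f_\omega(v)=\bigoplus_{P:s\leadsto v}C_\gamma(P)$. It remains to show that $f_\omega$ is a fixed point and that it is the extremal one. For the fixed-point property I use that $F_\gamma$ commutes with arbitrary $\oplus$ in each coordinate, again by the distributivity proposition. Hence
\[
F_\gamma f_\omega=\bigoplus_n F_\gamma f_n=f_\omega .
\]
For extremality, let $g$ be any fixed point with $g\le e_s$. Induction gives $g\le f_n$ for all $n$, hence $g\le f_\omega$. Under the finite acyclicity condition mentioned in the introduction, the chain stabilises after at most $|V|$ steps. Every fixed point is then determined by unfolding $F_\gamma$ along the finitely many paths, so the fixed point is unique and in particular equals $f_\gamma^\ast$ of Theorem~\ref{thm:lfp}.

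I expect the main obstacle to be the identification step in the induction, i.e.\ matching $F_\gamma$ to $C_\gamma$. The relaxation consumes edges two at a time, with $f(u_1)$ serving as a prefix value. The recursive $\mathrm{Fold}_\gamma$ of Section~\ref{sec:path}, however, reuses $w_{k-1}$, and $C_\gamma$ is only defined for $k\ge3$. My first attempt would be to read the fold as ``prefix value followed by the next two edges''. I would then use Theorem~\ref{thm:window-invariance} to show that this agrees with every admissible parenthesization. I would also fix boundary conventions: the value $0_T$ at $s$ seeds the first window, and paths whose length is not compatible with two-edge steps are either excluded or handled by a convention on $\Gamma$. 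A secondary obstacle is the direction of extremality. In the meet order, the path infimum is naturally the \emph{greatest} fixed point below $e_s$. So the acyclicity hypothesis, via uniqueness of the fixed point, is what lets the statement be read with the paper's ``least fixed point''.
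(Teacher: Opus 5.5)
Your plan follows the paper's proof: descend from $e_s$, induct on the number of windows, pass to the limit, and identify the limit with $f^\ast_\gamma$. The two obstacles you flag are real, but they are not bookkeeping. They make the proposition false as stated, so your ``conventions'' and the acyclicity hypothesis borrowed from the introduction change the theorem rather than prove it.

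\emph{First, the inductive identification fails.} The step $[C_\gamma(P'),w(u_1,u_2),w(u_2,v)]_\gamma=C_\gamma(P'\cdot u_2\cdot v)$ is false for the paper's $\mathrm{Fold}_\gamma$, and Theorem~\ref{thm:window-invariance} cannot bridge it. The three objects are parenthesizations of three different tuples: for even $k$ there is no admissible parenthesization of $(w_1,\dots,w_k)$ at all (each contraction removes two entries); the relaxation parenthesizes $(0_T,w_1,\dots,w_{2n})$; and $\mathrm{Fold}_\gamma$ parenthesizes $(w_1,w_2,w_3,w_3,\dots,w_{k-1},w_{k-1},w_k)$. Concretely, take $T=[0,\infty]$, $\oplus=\min$, $[x,y,z]_\gamma=x+y+z$; all axioms hold, with $0_T=0$ and $\top_T=\infty$. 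Take the path $s\to a\to b\to c\to d$ with unit weights, which is acyclic. Then $f^\ast_\gamma(d)=4$ but $C_\gamma(s,a,b,c,d)=[[1,1,1]_\gamma,1,1]_\gamma=5$. Likewise $f^\ast_\gamma(c)=\infty$ although $C_\gamma(s,a,b,c)=3$.

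\emph{Second, extremality fails without acyclicity.} As you suspected, the descending limit is the greatest fixed point below $e_s$, and in general the least fixed point is strictly smaller. Take $T=\{0,1\}$, $\oplus=\min$, $[x,y,z]_\gamma=\max(x,y,z)$, and a $0$-weighted $2$-cycle $a\rightleftarrows b$ not reachable from $s$. Every $f$ with $f(s)=0$ is fixed, so $f^\ast_\gamma(a)=0$, while the path value is the empty sum $\top_T=1$.

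\emph{Third, a gap you did not flag: your induction silently needs $[\top_T,y,z]_\gamma=\top_T$.} A window $(u_1,u_2,v)$ whose start $u_1$ has not been reached from $s$ contributes $[\top_T,w(u_1,u_2),w(u_2,v)]_\gamma$ to $f_{n+1}(v)$, and this term corresponds to no path. That identity is the empty-family case of the ``arbitrary families'' distributivity you cite, and it does not follow from axiom~(4). The infinite case does not follow either; the paper's one-line proof of that proposition covers neither. For a counterexample, take $T=\{0,1\}$, $\oplus=\min$, $[x,y,z]_\gamma=\min(x,y,z)$, $E=\{(a,b),(b,c)\}$ with weights $0$, and $s$ isolated. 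The graph is acyclic, yet already $f_1(c)=f^\ast_\gamma(c)=0$, although no path reaches $c$.

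\emph{What your argument does prove} is a corrected statement. Assume $G$ is acyclic and $[\top_T,y,z]_\gamma=\top_T$. Replace $C_\gamma$ by the seeded cost $[\cdots[0_T,w_1,w_2]_\gamma\cdots,w_{2n-1},w_{2n}]_\gamma$ on even-length paths, with the trivial path at $s$ having cost $0_T$. Then $f^\ast_\gamma(v)$ is the $\oplus$ of these seeded costs over $s\leadsto v$. The paper's own proof (``one verifies by induction \dots\ by minimality of the least fixed point'') is the same sketch and passes over all three points. It contains no ingredient that would rescue the statement as written.
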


\begin{proof}
Let $f^{(0)}=e_s$ and define iteratively
\[
f^{(n+1)}=F_\gamma f^{(n)}.
\]
By monotonicity,
\[
f^{(0)} \ge f^{(1)} \ge f^{(2)} \ge \cdots
\]
under the canonical order.

One verifies by induction on $n$ that $f^{(n)}(v)$ equals the infimum of all ternary path costs using at most $n$ ternary windows.
Taking the limit over $n$ yields the join over all finite paths.
By minimality of the least fixed point, the result follows.
\end{proof}

The proof mirrors the structure of binary semiring shortest-path analysis \cite{Mohri2002SemiringShortestDistance}, but the operator aggregates length-$2$ edge windows rather than single edges.

\subsection{Finite Convergence Under Acyclicity}

\begin{theorem}
If $G$ is acyclic, the sequence $f^{(n)}$ stabilizes in at most $|V|-2$ iterations.
\end{theorem}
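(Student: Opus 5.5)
The plan is a level argument on the finite DAG, in the spirit of Bellman--Ford stabilization for acyclic graphs. Since $G$ is acyclic, fix a topological order of $V$ and define for each $v$ its \emph{depth} $\delta(v)$ as the maximal number of vertices on a directed path from $s$ to $v$. Vertices unreachable from $s$ get depth $0$. Every directed path visits each vertex at most once, so $\delta(v)\le |V|$ and every $s$--$v$ path has at most $|V|-1$ edges. The operator $F_\gamma$ looks back along length-$2$ windows $(u_1,u_2,v)$. Each iteration therefore extends the ``resolved frontier'' by one window, and I will prove stabilization by induction on a suitable rank of $v$.

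\emph{Step 1 (rank).} For $v\neq s$, define $r(v)$ as the largest $n$ such that some backward chain of windows $(u_1,u_2,v),(u_1',u_1'',u_1),\dots$ of length $n$ exists in $E^{(2)}$. Acyclicity makes $r$ finite. A chain of $n$ windows uses $n+1$ distinct window endpoints, all distinct vertices. I will bound $r(v)\le |V|-2$ by noting that each window additionally consumes a middle vertex, and that the source $s$ and $v$ itself are fixed endpoints. This bookkeeping is where I expect the exact constant $|V|-2$ to come from. I will double-check it carefully, since a naive count gives roughly $(|V|-1)/2$, which is even better. The point is only that some bound $\le |V|-2$ holds.

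\emph{Step 2 (induction).} I claim $f^{(n)}(v)=f^{(n+1)}(v)$ for all $n\ge r(v)$, proved by strong induction on $r(v)$. If $r(v)=0$, there are no windows ending at $v$, so $(F_\gamma f)(v)$ is the empty join (the top element) or $0_T$ for $v=s$. This value is independent of $f$, so it is constant from the first iteration on. The case $n=0$ needs a small check against $e_s$, which agrees at these vertices. If $r(v)=m>0$, each window $(u_1,u_2,v)$ has $r(u_1)\le m-1$. By induction, $f^{(n)}(u_1)=f^{(n-1)}(u_1)$ for $n-1\ge m-1$. Hence
\[
f^{(n+1)}(v)=\bigoplus [f^{(n)}(u_1),w(u_1,u_2),w(u_2,v)]_\gamma=\bigoplus [f^{(n-1)}(u_1),w(u_1,u_2),w(u_2,v)]_\gamma=f^{(n)}(v)
\]
for $n\ge m$. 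Only the definition of $F_\gamma$ is used here, not monotonicity or distributivity.

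\emph{Step 3 (conclusion).} Taking $N=\max_v r(v)\le |V|-2$ gives $f^{(N)}=f^{(N+1)}$. The sequence is then constant, so it equals a fixed point of $F_\gamma$, and by the path interpretation proposition it coincides with $f_\gamma^\ast$.

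The main obstacle is the precise accounting in Step 1. The delicate point is handling the initial iterate $e_s$, which assigns $\top_T$ rather than the empty join at vertices with no incoming windows, and making the off-by-one bound match $|V|-2$ exactly. If the sharp count fails, I would fall back on proving stabilization within $|V|-1$ steps and then show that the first iteration already fixes all rank-$0$ vertices, which saves one step.
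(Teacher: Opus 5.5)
Your proof is correct, and it is a more careful route than the paper's. The paper argues globally: a path in a DAG has at most $|V|-1$ edges, each ternary window consumes two, so ``no new windows can arise'' after $|V|-2$ iterations. It never explains why the values $f^{(n)}(v)$ actually stop changing. Your strong induction on the window rank, giving $f^{(n)}(v)=f^{(n+1)}(v)$ for all $n\ge r(v)$, supplies exactly that mechanism and uses only the definition of $F_\gamma$.

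It also yields a sharper bound. A chain of $r$ windows is a walk with $2r$ edges, hence in a DAG a path on $2r+1$ distinct vertices. So $r(v)\le\lfloor(|V|-1)/2\rfloor$, which is at most $|V|-2$ whenever $|V|\ge 2$. (For $|V|=1$ the stated bound is negative and meaningless anyway.) The ``delicate bookkeeping'' and the fallback in your last paragraph are therefore unnecessary. Your remark that $s$ and $v$ are fixed endpoints is beside the point, since your chains need not start at $s$.

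Likewise there is no issue at $n=0$. Since $\oplus$ is the meet for the canonical order, the empty $\bigoplus$ is $\top_T=e_s(v)$. Do set $r(s):=0$ (or treat $s$ separately), since windows may start at $s$ and $f^{(n)}(s)=0_T$ for every $n$.

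Finally, the identification with $f^\ast_\gamma$ in Step 3 is not part of the statement, and you should not rest it on the path-interpretation proposition. Every fixed point lies below $e_s$, so a sequence descending from $e_s$ that stabilizes lands on the \emph{greatest} fixed point. What makes it also the least is that, on a DAG, your rank induction shows every fixed point of $F_\gamma$ is determined rank by rank, so the fixed point is unique.
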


\begin{proof}
In an acyclic graph, the maximum number of edges in any path is at most $|V|-1$.
Each ternary window reduces path length by $2$ edges.
Hence after $|V|-2$ iterations, no new windows can arise.
Stabilization follows.
\end{proof}

This is the ternary analogue of finite convergence in acyclic semiring path problems \cite{Mohri2002SemiringShortestDistance}.

\subsection{Structural Observation}

The fixed-point theorem required only:

\begin{enumerate}
\item completeness of $(T,\oplus)$,
\item monotonicity of $[\cdot,\cdot,\cdot]_\gamma$,
\item distributivity over $\oplus$.
\end{enumerate}

Binary associativity was not used.

Hence fixed-point semantics extend naturally to ternary path algebras.
The algebraic enlargement introduced in Section~\ref{sec:ttgs} is compatible with lattice-theoretic solution theory.
\section{Algorithmic Structure}\label{sec:algorithmic}

We now examine the computational structure induced by ternary relaxation.
The aim is structural analysis, not implementation detail.

\subsection{Ternary Relaxation Scheme}

Let $G=(V,E)$ be finite.
Let $s\in V$.
Initialize
\[
f^{(0)}(v)=
\begin{cases}
0_T & v=s,\\
\top_T & v\ne s.
\end{cases}
\]

Iterate
\[
f^{(n+1)} = F_\gamma f^{(n)},
\]
where $F_\gamma$ is defined in Section~\ref{sec:fixedpoint}.

This defines a deterministic relaxation procedure.
In the degenerate dyadic case, it reduces to the semiring relaxation scheme underlying Bellman--Ford and its generalizations \cite{Mohri2002SemiringShortestDistance}.

\subsection{Local Update Rule}

For each vertex $v\ne s$,
\[
f^{(n+1)}(v)
=
\bigoplus_{(u_1,u_2,v)\in E^{(2)}}
[f^{(n)}(u_1),\, w(u_1,u_2),\, w(u_2,v)]_\gamma.
\]

Binary relaxation updates along single edges.
Ternary relaxation updates along length-$2$ edge windows.

Let $m=|E|$.
The number of admissible windows is
\[
|E^{(2)}| = \sum_{u\in V} \deg^{-}(u)\deg^{+}(u).
\]
In the worst case this is $O(m\Delta)$, where $\Delta$ is maximum degree.

Thus the elementary relaxation step has higher combinatorial width than the dyadic case.

\subsection{Window Propagation Depth}

Let $P$ be a path with $k$ edges.
Binary multiplication requires $k-1$ compositions.
Ternary folding requires $k-2$ ternary windows.

Each iteration of $F_\gamma$ propagates information by two edges.
After $n$ iterations, information may travel at most $2n$ edges.

Hence the propagation radius is linear in iteration count but with step size two.
This differs structurally from binary relaxation, where propagation is edgewise.

\subsection{Complexity Bound}

Assume $G$ has $n$ vertices and $m$ edges.
One evaluation of $F_\gamma$ requires examination of $|E^{(2)}|$ windows.

\begin{proposition}
Worst-case time per iteration is $O(m\Delta)$.
\end{proposition}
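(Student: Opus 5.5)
The plan is to bound the cost of one application of $F_\gamma$ by counting the work done per window. By the local update rule, $(F_\gamma f)(v)$ for $v\ne s$ is a $\oplus$-aggregate over the windows $(u_1,u_2,v)\in E^{(2)}$ ending at $v$. So a full evaluation of $F_\gamma f$ touches each window of $E^{(2)}$ exactly once, plus constant work for the vertex $s$. The computational model will be the same one implicitly used for Bellman--Ford in the semiring setting: one evaluation of $[\cdot,\cdot,\cdot]_\gamma$ and one binary $\oplus$ each cost $O(1)$, and the graph is given by adjacency lists, so that $E^{(2)}$ can be enumerated without overhead.

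The key steps are as follows.

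\begin{enumerate}
\item Enumerate the windows by looping over middle vertices $u_2\in V$, and for each $u_2$ over all pairs consisting of an in-edge $(u_1,u_2)$ and an out-edge $(u_2,v)$. This produces every element of $E^{(2)}$ exactly once, in total time $O(n+|E^{(2)}|)$.
\item For each window, compute $[f(u_1),w(u_1,u_2),w(u_2,v)]_\gamma$ in $O(1)$ time and fold it into an accumulator $\mathrm{acc}(v)$. Each accumulator is initialized to $\top_T$, the neutral element for $\oplus$ since $\top_T\oplus a=a$ in the canonical order. Finally set the value at $s$ to $0_T$. The total time is then $O(n+|E^{(2)}|)$.
\item Bound $|E^{(2)}|$ using the identity stated just above the proposition:
\[
|E^{(2)}|=\sum_{u}\deg^-(u)\deg^+(u)\le \Delta\sum_u \deg^-(u)=\Delta m .
\]
This gives $O(n+m\Delta)$ overall.
\end{enumerate}

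The one genuinely delicate point is the additive $O(n)$ term, that is, why it may be absorbed into $O(m\Delta)$. The first thing to try is to assume, as is standard, that isolated vertices are discarded or that $m\ge n-1$ (for instance, by restricting to vertices reachable from $s$, since unreachable ones stay at $\top_T$ forever and need not be touched). Under this assumption $n=O(m)\subseteq O(m\Delta)$ whenever $\Delta\ge 1$. Otherwise I would state the bound as $O(n+m\Delta)$ and note that it coincides with $O(m\Delta)$ in the non-degenerate regime.

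A secondary caveat concerns the model itself. If $\oplus$ or the ternary operation are not constant-time, the bound should be read as counting ternary evaluations and $\oplus$-operations. In that reading it holds verbatim, with exactly $|E^{(2)}|$ ternary evaluations and at most $|E^{(2)}|$ joins.
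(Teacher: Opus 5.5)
Your proposal is correct and is essentially the paper's argument: count the windows through each middle vertex $u$ as $\deg^-(u)\deg^+(u)$ and sum. You also make explicit the step $\sum_u\deg^-(u)\deg^+(u)\le\Delta m$ and the additive $O(n)$ term, both of which the paper leaves unstated.

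One small caution concerns your parenthetical claim that vertices unreachable from $s$ stay at $\top_T$ forever. That needs $[\top_T,y,z]_\gamma=\top_T$, which the axioms do not force: a constant ternary operation $[x,y,z]_\gamma=c\ne\top_T$ satisfies all of them. So justify absorbing the $O(n)$ term by discarding isolated vertices (which always receive the empty join $\top_T$) or by assuming $m\ge n-1$, neither of which needs that property.
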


\begin{proof}
Each vertex $u$ contributes at most $\deg^{-}(u)\deg^{+}(u)$ windows.
Summing over $u$ gives the stated bound.
\end{proof}

If $\Delta=O(n)$, the bound becomes $O(mn)$.
This exceeds the $O(m)$ cost of dyadic relaxation in sparse graphs \cite{Mohri2002SemiringShortestDistance}.

Thus ternary composition induces genuine combinatorial growth.

\subsection{Structural Comparison with Semiring Algorithms}

Binary semiring algorithms rely on associative multiplication and distributivity \cite{Mohri2002SemiringShortestDistance}.
Their algebraic structure is dyadic.

The ternary scheme retains:
\begin{enumerate}
\item idempotent aggregation,
\item order monotonicity,
\item fixed-point convergence.
\end{enumerate}

It discards:
\begin{enumerate}
\item binary associativity,
\item edgewise locality.
\end{enumerate}

This shift changes the granularity of state propagation.

Recent work on semiring-based dynamic programming emphasizes algebraic abstraction of local transitions \cite{BarilCouceiroLagerkvist2025SemiringDPArXiv}.
The present scheme enlarges the locality window from pairs to triples.
The algebraic relaxation operator remains monotone and admits least fixed points as shown in Section~\ref{sec:fixedpoint}.

\subsection{Algorithmic Separation}

The separation theorem (Section~\ref{sec:nonred}) implies the following.

\begin{theorem}
There exists a ternary associative path composition for which no binary semiring algorithm can reproduce all path costs.
\end{theorem}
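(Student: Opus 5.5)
The plan is to reduce the statement to Theorem~\ref{thm:separation} by fixing precisely what ``a binary semiring algorithm reproduces all path costs'' means. I would formalize it as follows. A binary scheme on a weight set $A$ is an associative $\otimes:A^2\to A$ that assigns to each path with edge weights $w_1,\dots,w_k$ the cost $w_1\otimes\cdots\otimes w_k$; this is the dyadic fold of Section~\ref{sec:path}, which is well defined by binary associativity. Such a scheme \emph{reproduces} the ternary path costs if
\[
C_\gamma(P)=w_1\otimes\cdots\otimes w_k
\]
for every path $P$ with $k\ge 3$ and every weight assignment $w:E\to A$. Relaxation algorithms of Bellman--Ford type only compute $\bigoplus_P$ of such dyadic products, so any weaker claim reduces to this one once aggregation is fixed.

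First, take $A=\{0,1\}$ with $[x,y,z]=1\oplus x\oplus y\oplus z$, as in the proof of Theorem~\ref{thm:separation}. That proof shows the operation is ternary associative. Theorem~\ref{thm:window-invariance} then makes $C_\gamma(P)=\mathrm{Fold}_\gamma(w_1,\dots,w_k)$ a well-defined path composition. Next, choose a graph that realizes every weight triple: a directed path $v_0\to v_1\to v_2\to v_3$ with three edges. Its weights $(w_1,w_2,w_3)$ may be assigned arbitrarily in $A^3$, and for this path $C_\gamma(P)=[w_1,w_2,w_3]$.

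Now suppose some associative $\otimes$ reproduced all path costs. Then $(x\otimes y)\otimes z=[x,y,z]$ would hold for all $x,y,z\in A$, which contradicts part~(2) of Theorem~\ref{thm:separation}. This gives the result at the level of single path costs.

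The main obstacle is interpretive rather than computational. ``Binary semiring algorithm'' could also allow a change of carrier: an encoding $\phi:A\to S$ into some semiring $(S,\otimes)$ together with a decoding map. I would handle this second case explicitly. If $\phi$ is injective and $\otimes$ restricts to $\phi(A)$, the argument above applies after transport along $\phi$. For arbitrary encodings, I expect the claim only holds with the restriction that the algorithm acts on the original weight set with edgewise dyadic composition. I would state that restriction as part of the definition and not claim more.

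A secondary point is the paper's use of $\oplus$ both for $\mathbb{F}_2$ addition and for idempotent aggregation. I would state that the theorem concerns path costs $C_\gamma(P)$ and not $\mathrm{Opt}_\gamma$. This avoids needing the $\mathbb{F}_2$ example to carry an idempotent order, which it does not supply.
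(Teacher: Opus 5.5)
Your proposal is correct and follows the paper's proof. The paper simply asserts that reproducing all path costs would force $[x,y,z]=(x\otimes y)\otimes z$ with $\otimes$ associative and then invokes Theorem~\ref{thm:separation}; your same-carrier formalization of ``reproduce'' and the three-edge path realizing every weight triple make that implicit step explicit.

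One small caution on your side remark: transport along an injective $\phi$ works only if the decoding map is $\phi^{-1}$ on $\phi(A)$. With an arbitrary decoding $\psi$, the three-edge costs are already matched by $\psi((x\oplus y)\oplus z)$ with $\psi(s)=1\oplus s$, that is, the $\mathbb{F}_2$ sum followed by complementation. So excluding any post-processing from your definition is genuinely necessary.
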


\begin{proof}
If a binary semiring algorithm reproduced all ternary path costs, the ternary operation would factor through an associative binary operation.
This contradicts Theorem~\ref{thm:separation}.
\end{proof}

Hence algorithmic equivalence fails in general.
The ternary relaxation scheme cannot always be reduced to a dyadic semiring shortest-path computation.

\subsection{Methodological Implication}

The algorithmic difference is not a constant-factor variation.
It arises from algebraic non-reducibility.
Binary semiring frameworks exhaust exactly those path problems whose composition factors dyadically.
Higher-order interaction requires enlargement of the algebraic interface.

This establishes methodological innovation at the algebraic level rather than at the level of implementation detail.
\section{Complexity Analysis}\label{sec:complexity}

We examine the combinatorial growth induced by ternary composition.
The analysis is structural.

\subsection{Window Graph Representation}

Define the \emph{window graph} $G^{(2)}$ as follows.

Vertices of $G^{(2)}$ are ordered edge pairs
\[
(u_1,u_2)\in E.
\]
There is a directed edge
\[
(u_1,u_2) \to (u_2,u_3)
\]
whenever $(u_2,u_3)\in E$.

Thus $G^{(2)}$ encodes length-$2$ edge windows.
Binary semiring algorithms operate on $G$.
Ternary relaxation operates naturally on $G^{(2)}$.

\begin{proposition}
$|V(G^{(2)})| = |E|$ and
\[
|E(G^{(2)})|
=
\sum_{u\in V} \deg^{-}(u)\deg^{+}(u).
\]
\end{proposition}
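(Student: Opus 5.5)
The proof is a direct counting argument based on the definition of $G^{(2)}$. I will treat the two claims separately. As the paper itself notes, the vertices of $G^{(2)}$ are the ordered pairs $(u_1,u_2)$ with $(u_1,u_2)\in E$, so they are exactly the directed edges of $G$. The identity map $E\to V(G^{(2)})$, $(u_1,u_2)\mapsto(u_1,u_2)$, is therefore a bijection, which gives $|V(G^{(2)})|=|E|$ at once. I will state explicitly that the phrase ``ordered edge pairs'' means elements of $E$, not pairs of edges.

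For the edge count, I will set up a bijection between $E(G^{(2)})$ and the set $E^{(2)}$ of directed length-$2$ paths $(u_1,u_2,u_3)$ with $(u_1,u_2)\in E$ and $(u_2,u_3)\in E$. An edge of $G^{(2)}$ goes from $(u_1,u_2)$ to $(u_2,u_3)$. The source vertex records $u_1,u_2$ and the target records $u_3$. Conversely, each length-$2$ path determines a unique such edge. So $|E(G^{(2)})|=|E^{(2)}|$.

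Next I will partition $E^{(2)}$ by its middle vertex $u_2=u$. For a fixed $u$, a window through $u$ is the choice of an incoming edge $(u_1,u)$ together with an independent choice of an outgoing edge $(u,u_3)$. There are $\deg^{-}(u)$ choices for the first and $\deg^{+}(u)$ for the second, so $u$ contributes $\deg^{-}(u)\deg^{+}(u)$ windows. Summing over $u\in V$ gives the stated formula. This also agrees with the count used in Section~\ref{sec:algorithmic}.

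The only real obstacle is the convention for degenerate windows. If self-loops are present, or if backtracking windows with $u_3=u_1$ occur, the argument must count them as well. Both cases are consistent with the product formula, since the choices of incoming and outgoing edge are independent and unrestricted. The definition of $G^{(2)}$ imposes no distinctness condition on $u_1,u_2,u_3$, so I will note this explicitly to make clear that the count includes these windows. Apart from that, the argument is elementary double counting.
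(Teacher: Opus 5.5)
Your proposal is correct and takes the same approach as the paper. The paper's proof is a single line: each edge of $G$ is one vertex of $G^{(2)}$, and each admissible concatenation of two edges is one edge of $G^{(2)}$. Your bijection with length-$2$ paths, partitioned by middle vertex $u$ into $\deg^{-}(u)\deg^{+}(u)$ windows, makes that count explicit, and your note that self-loops and backtracking windows are counted usefully clarifies a point the paper leaves implicit.
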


\begin{proof}
Each edge corresponds to one window vertex.
Each admissible concatenation of two edges yields one window edge.
\end{proof}

Hence in dense graphs,
\[
|E(G^{(2)})| = \Theta(n^3).
\]

\subsection{Reduction to Binary Case}

If $[\cdot,\cdot,\cdot]_\gamma$ factors through an associative binary operation $\otimes$,
then ternary folding reduces to repeated binary multiplication.
In this case the window graph collapses to $G$.
The complexity reduces to the standard semiring case \cite{Mohri2002SemiringShortestDistance}.

Separation (Theorem~\ref{thm:separation}) shows that such reduction is not generally available.

\subsection{Propagation Depth}

Let $d(v)$ denote the length of the shortest path from $s$ to $v$.

Binary relaxation propagates information by one edge per iteration.
Ternary relaxation propagates two edges per iteration.

Let $L$ be the maximal path length in $G$.
Then binary convergence requires at most $L$ iterations \cite{Mohri2002SemiringShortestDistance}.
Ternary convergence requires at most $\lceil L/2\rceil$ iterations.

Thus iteration depth decreases.
Per-iteration cost increases.

This trade-off is algebraic rather than heuristic.

\subsection{Asymptotic Comparison}

Assume sparse regime $m=O(n)$.
Binary semiring Bellman--Ford complexity:
\[
O(nm) = O(n^2).
\]

Ternary relaxation:
\[
O\big(n \cdot |E(G^{(2)})|\big).
\]

In worst case
\[
|E(G^{(2)})| = O(n^3),
\]
so overall
\[
O(n^4).
\]

The difference is polynomially strict.

\subsection{Lower-Bound Phenomenon}

The non-reducibility theorem implies that certain ternary path cost functions cannot be represented by any binary semiring.
Hence no binary semiring algorithm can compute them via dyadic multiplication.

\begin{proposition}
There exists a ternary associative path problem whose evaluation cannot be reduced to classical semiring shortest-path computation.
\end{proposition}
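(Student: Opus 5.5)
The plan is to instantiate the path framework of Section~\ref{sec:path} with the operation from the proof of Theorem~\ref{thm:separation}, and then reduce the claim directly to that theorem. Take $A=\{0,1\}$ with $[x,y,z]=1\oplus x\oplus y\oplus z$ over $\mathbb{F}_2$; this operation was shown there to be ternary associative. Then consider the simplest graph on which a ternary window is active: the directed path $G$ with vertices $v_0,v_1,v_2,v_3$ and edges $(v_{i-1},v_i)$ for $i=1,2,3$. Edge weights $w_1,w_2,w_3$ range over all of $A^3$.

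\textbf{Step 1 (the problem is well defined).} There is exactly one $v_0\leadsto v_3$ path, and it has $k=3$. So the aggregation in $\mathrm{Opt}(v_0,v_3)$ ranges over a single term, and by Definition~\ref{def:ternary-path-cost} we get $\mathrm{Opt}(v_0,v_3)=C(P)=[w_1,w_2,w_3]$. Theorem~\ref{thm:window-invariance} guarantees that no window-placement ambiguity arises. Because only one path is involved, no order-theoretic input (completeness, idempotent join) is needed at this step. This matters, since the XOR-type example carries no compatible idempotent order.

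\textbf{Step 2 (pin down what "reduction" means).} Before proving impossibility, fix the notion being ruled out. By a reduction to classical semiring shortest-path computation I mean an associative binary operation $\otimes$ on the same carrier $A$ such that, for every weight assignment, the value computed dyadically,
\[
\bigoplus_{P} w_1\otimes\cdots\otimes w_k ,
\]
agrees with $\mathrm{Opt}$. This is exactly the dyadic degeneration described in Section~\ref{sec:path}.

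\textbf{Step 3 (derive the contradiction).} On $G$ the dyadic expression is $(w_1\otimes w_2)\otimes w_3$. Agreement for all $(w_1,w_2,w_3)\in A^3$ therefore means $[x,y,z]=(x\otimes y)\otimes z$ identically, which is precisely the factorization excluded by Theorem~\ref{thm:separation}(2). Hence no such $\otimes$ exists, and the path problem on $G$ is not reducible in this sense.

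The main obstacle is Step 2, the formalization of "reduction." A more generous notion might allow re-encoding weights through a map $\varphi:A\to S$ into some other semiring $S$, with a decoding map $\psi:S\to A$. Against that notion the argument above is insufficient. Indeed, the operation $[x,y,z]$ equals $\psi(\varphi(x)\cdot\varphi(y)\cdot\varphi(z))$ in a suitable encoding: map into $\mathbb{Z}_2$ under addition, add the constant $1$ during decoding, and note that an arity-three product of this shape is just a shifted sum. So the statement should be proved, and read, with the same-carrier, same-weights notion of Step 2, matching Corollary~\ref{cor:proper} and the algorithmic separation theorem of Section~\ref{sec:algorithmic}. I would make this choice explicit at the start of the proof rather than attempt the stronger encoding-invariant claim.
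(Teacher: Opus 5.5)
Your proposal is correct and follows the same route as the paper. The paper's proof is the one-line observation that a reduction would force $[x,y,z]=(x\otimes y)\otimes z$, contradicting Theorem~\ref{thm:separation}. Your version makes explicit what that proof leaves implicit: a concrete instance (the three-edge path, where $\mathrm{Opt}(v_0,v_3)=[w_1,w_2,w_3]$), a same-carrier notion of reduction quantified over all weight assignments, and the caveats that the XOR example admits no compatible idempotent order and becomes reducible once re-encodings of weights are allowed.
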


\begin{proof}
Immediate from Theorem~\ref{thm:separation}.
If reduction existed, ternary composition would factor dyadically.
\end{proof}

Thus the enlargement in complexity reflects enlargement in algebraic expressivity.

\subsection{Comparison with Weighted Automata}

Weighted automata over semirings admit polynomial-time shortest-path evaluation under idempotent structure \cite{DrosteKuich2024TCSUniversalSupport}.
Their transition algebra is dyadic.

Ternary systems correspond formally to automata whose transition weight depends on consecutive triples of transitions.
Such automata cannot, in general, be simulated by classical weighted automata without state blow-up proportional to $|E|$.

Hence the complexity growth corresponds to state-space lifting from $V$ to $E$.

\subsection{Dimensional Extension}

For $k$-ary composition, define the window graph $G^{(k-1)}$ with vertices representing length-$(k-1)$ edge sequences.
Then
\[
|V(G^{(k-1)})| = O(m^{k-1})
\]
in worst case.

Thus dimensional generalization yields combinatorial growth polynomial in $m^{k-1}$.

Binary semiring theory corresponds to $k=2$.
The ternary case $k=3$ is the first nontrivial enlargement.

\subsection{Conclusion of Complexity Analysis}

The increase in computational width is not incidental.
It arises from algebraic non-factorization.
Binary semiring frameworks capture exactly dyadic interaction.
Ternary idempotent $\Gamma$-semirings enlarge interaction dimension.
The complexity reflects this dimensional expansion.
\section{Conceptual and Theoretical Impact}\label{sec:impact}

We examine the position of ternary idempotent $\Gamma$-semirings within the existing algebraic landscape.

\subsection{Semiring Path Algebras as a Proper Subclass}

Classical path algebras over idempotent semirings rely on dyadic multiplication \cite{Mohri2002SemiringShortestDistance}.
This framework encompasses weighted automata \cite{DrosteKuich2024TCSUniversalSupport}, tropical optimization \cite{Krivulin2015TropicalOptimization}, and max-plus linear systems \cite{Butkovic2010MaxLinearSystems}.

Section~\ref{sec:nonred} established that ternary associative composition need not factor through any associative binary operation.
Hence the dyadic semiring framework is not algebraically maximal.

\begin{theorem}
The class of ternary idempotent $\Gamma$-semiring path problems strictly contains the class of idempotent semiring path problems.
\end{theorem}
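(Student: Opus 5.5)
The plan is to prove the two halves separately: first the inclusion, then strictness. The inclusion uses the degeneration proposition of Section~\ref{sec:ttgs} together with the dyadic degeneration computation of Section~\ref{sec:path}. Strictness uses Theorem~\ref{thm:separation} and Corollary~\ref{cor:proper}. Before either, I would fix what ``the class of path problems'' means. I take it to be the family of functionals $(G,w,s,t)\mapsto \bigoplus_{P:s\leadsto t}(\text{cost of }P)$, where costs are computed either by $\otimes$-products (semiring case) or by $C_\gamma$ (ternary case). Per Definition~\ref{def:ternary-path-cost}, paths are restricted to $k\ge 3$ edges, and I would do the same on the semiring side.

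\emph{Inclusion.} Let $(T,\oplus,\otimes)$ be an idempotent semiring with $(T,\oplus)$ complete, and take $\Gamma=\{\gamma\}$ with $[x,y,z]_\gamma:=(x\otimes y)\otimes z$. Ternary associativity follows because each of the three bracketings equals $x\otimes y\otimes z\otimes u\otimes v$ by binary associativity. Distributivity in each coordinate follows from two-sided distributivity of $\otimes$. Monotonicity follows from distributivity: if $x\le x'$, i.e.\ $x\oplus x'=x$, then $[x,y,z]_\gamma\oplus[x',y,z]_\gamma=[x\oplus x',y,z]_\gamma=[x,y,z]_\gamma$. Hence this is a ternary idempotent $\Gamma$-semiring. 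By the computation in the dyadic degeneration subsection of Section~\ref{sec:path}, $C_\gamma(P)=w_1\otimes\cdots\otimes w_k$, so $\mathrm{Opt}_\gamma$ coincides with the semiring path value. Every semiring path problem therefore appears in the ternary class.

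\emph{Strictness.} I must exhibit a \emph{non-degenerate} ternary idempotent $\Gamma$-semiring, not merely a ternary semigroup. The example in Theorem~\ref{thm:separation} carries no order, so I would lift it to a power set. Let $A=\{0,1\}$ with $[x,y,z]=1\oplus x\oplus y\oplus z$ (here $\oplus$ is addition in $\mathbb{F}_2$), and set $T=\mathcal{P}(A)$ with $X\oplus Y:=X\cup Y$. This is commutative, idempotent and complete, since arbitrary unions exist. Define $[X,Y,Z]:=\{[x,y,z]:x\in X,\,y\in Y,\,z\in Z\}$. Ternary associativity lifts elementwise from $A$, and the operation distributes over arbitrary unions in each coordinate; monotonicity then follows as above. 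It remains to show that no associative $\otimes$ on $\mathcal{P}(A)$ satisfies $[X,Y,Z]=(X\otimes Y)\otimes Z$.

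\emph{Main obstacle.} I expect this non-factorization on $\mathcal{P}(A)$ to be the hard step. A putative $\otimes$ is an arbitrary binary operation on the $4$-element set $\mathcal{P}(A)$, and it need not map singletons to singletons, so the case analysis of Theorem~\ref{thm:separation} does not transfer verbatim. My first attempt would use the identities $[\{0\},\{0\},\{0\}]=\{1\}$, $[\{0\},\{0\},\{1\}]=\{0\}$ and $[\{1\},\{0\},\{0\}]=\{0\}$. Writing $a=\{0\}\otimes\{0\}\in\mathcal{P}(A)$, these give $a\otimes\{0\}=\{1\}$ and $a\otimes\{1\}=\{0\}$. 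I would then case-split on the four possible values of $a$. To close each case I would use associativity of $\otimes$ together with identities involving $\emptyset$ (note $[\emptyset,Y,Z]=\emptyset$) and $A$ (note $[A,Y,Z]=A$ for nonempty $Y,Z$). If this finite case check became unwieldy, the fallback is a direct exhaustive check over the $4^{16}$ binary tables, which is unpleasant but finite. Once non-degeneracy holds, the argument of the algorithmic separation theorem of Section~\ref{sec:algorithmic} shows that the resulting $\mathrm{Opt}_\gamma$ is not a semiring path problem. That gives strict containment.
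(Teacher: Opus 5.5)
Your route differs from the paper's, and on strictness it is the sounder one. The paper's proof is a single sentence: if the classes coincided, every ternary associative composition would factor dyadically, contradicting Theorem~\ref{thm:separation}. It leaves inclusion implicit in the degeneration subsection of Section~\ref{sec:path}. It also applies the separation theorem to a structure that is not a ternary idempotent $\Gamma$-semiring. Under either semilattice order on $\{0,1\}$, the map $1\oplus x\oplus y\oplus z$ is antitone in each coordinate (for instance $[0,0,0]=1$ while $[1,0,0]=0$), so axiom (3) fails, and the paper never builds an ordered example. Your lift to $\mathcal{P}(A)$, with union as addition and reverse inclusion as canonical order, satisfies axioms (1)--(4) and fills exactly this hole.

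The step you call the main obstacle is short, and your case split on $a=\{0\}\otimes\{0\}$ closes without using associativity of $\otimes$.
\begin{itemize}
\item You have $a\otimes\{0\}=[\{0\},\{0\},\{0\}]=\{1\}$.
\item Put $c=\{1\}\otimes\{0\}$. Then $c\otimes\{0\}=[\{1\},\{0\},\{0\}]=\{0\}$, and $c=(a\otimes\{0\})\otimes\{0\}=[a,\{0\},\{0\}]$.
\item If $a=\{0\}$, then $a\otimes\{0\}=a\neq\{1\}$ directly.
\item If $a=\{1\}$, then $c=a\otimes\{0\}=\{1\}$, so $c\otimes\{0\}=c=\{1\}$.
\item If $a\in\{\emptyset,A\}$, then $[a,\{0\},\{0\}]=a$, so $c=a$ and $c\otimes\{0\}=\{1\}$.
\end{itemize}
The last two cases contradict $c\otimes\{0\}=\{0\}$.

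For the final step, do not invoke the argument of Section~\ref{sec:algorithmic}, which is only valid once carrier and weights are fixed. Note that $1\oplus x\oplus y\oplus z$ is the complement of the associative $\mathbb{F}_2$-sum, so allowing re-encoding of weights would undermine the separation. Instead, compare the two functionals on the single path $v_0\to v_1\to v_2\to v_3$. Agreement for all weights forces $[X,Y,Z]=(X\otimes Y)\otimes Z$.

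The genuine gap is in your inclusion step, and it is inherited from the paper. As defined, $\mathrm{Fold}_\gamma(w_1,\dots,w_k)=[\mathrm{Fold}_\gamma(w_1,\dots,w_{k-1}),w_{k-1},w_k]_\gamma$ uses $w_{k-1}$ twice. So with $[x,y,z]_\gamma=(x\otimes y)\otimes z$ and $k=4$ you get $w_1\otimes w_2\otimes w_3\otimes w_3\otimes w_4$. The identity $C_\gamma(P)=w_1\otimes\cdots\otimes w_k$ that you cite therefore fails unless $\otimes$ is idempotent.

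Under this definition the inclusion itself fails for the min-plus semiring on $[0,\infty]$, not just its proof. Single paths with three and four edges force $[x,y,z]=x+y+z$, and then $x+y+2z+u=x+y+z+u$ for all weights, which is false.

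So the definition you fix at the outset must replace $C_\gamma$ by a genuine contraction of non-overlapping consecutive triples. Such a contraction exists only for odd $k$ and is placement-independent by Theorem~\ref{thm:window-invariance}, so both classes must also be restricted to odd $k\ge 3$. With that change, your degeneration computation holds by binary associativity, and the single-path argument above still applies.
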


\begin{proof}
If equality held, every ternary associative composition would factor dyadically.
This contradicts Theorem~\ref{thm:separation}.
\end{proof}

Thus semiring path algebra forms a proper subclass.

\subsection{Separation at the Level of Algebra}

Associative triple systems have independent structural identities \cite{Bremner2025OperatorIdentitiesTripleSystems, Felipe2025AssociativeTripleTrisystems}.
These identities do not collapse to binary associativity.
The separation proved here shows that this algebraic distinction persists when combined with idempotent order.

Binary semiring theory is therefore characterized precisely by dyadic associativity plus distributivity.
Removing dyadic restriction enlarges the algebraic category.

\subsection{Order-Theoretic Stability}

Despite the enlargement in composition, fixed-point solvability remains intact.
Section~\ref{sec:fixedpoint} showed that monotone relaxation operators admit least fixed points via lattice theory \cite{BranzeiPhillipsRecker2025DiscMath}.

Hence the extension affects algebraic expressivity but not order-theoretic solvability.
This distinction clarifies which properties of semiring path theory depend on dyadic multiplication and which depend solely on idempotent order.

\subsection{Automata-Theoretic Perspective}

Weighted automata over semirings evaluate path weights through binary composition \cite{DrosteKuich2024TCSUniversalSupport}.
Ternary systems correspond to automata whose transition valuation depends on triples of consecutive transitions.

Such automata cannot, in general, be simulated by classical weighted automata without state lifting proportional to window size.
This reflects the window-graph lifting discussed in Section~\ref{sec:complexity}.

Thus the algebraic extension has automata-theoretic consequences.

\subsection{Methodological Innovation}

The enlargement introduced here is not a modification of existing proofs.
Binary semiring arguments cannot be directly reused.
The non-reducibility theorem shows that dyadic factorization may fail.
Consequently, ternary systems require independent algebraic treatment.

The methodological shift is from binary composition to window-based composition.
This shift changes locality structure while preserving monotonicity and fixed-point semantics.

\subsection{Dimensional Hierarchy}

Section~\ref{sec:dimensional} extended the construction to arbitrary arity.
The hierarchy is strict at $k=3$.
Binary semiring multiplication corresponds to $k=2$.
Higher arity enlarges interaction dimension.

The distinction is algebraic.
It does not rely on analytic arguments.
It follows from associativity identities and finite counterexamples.

\subsection{Theoretical Boundary}

Binary semiring path algebra exhausts exactly those path problems whose composition is dyadic and associative.
Ternary idempotent $\Gamma$-semirings represent the next structural level.

This boundary is precise.
It is not heuristic.
It is certified by the separation theorem.

The consequence is conceptual.
Semiring theory is a first-order member of a broader hierarchy of ordered higher-arity path algebras.
The ternary case demonstrates that this hierarchy is nontrivial.
\section{Conclusion}\label{sec:conclusion}

We summarize the structural findings.

Binary idempotent semirings provide the algebraic foundation for classical path problems \cite{Mohri2002SemiringShortestDistance}. 
Their composition is dyadic. 
Associativity ensures unambiguous evaluation of paths. 
Fixed-point semantics follow from lattice completeness \cite{BranzeiPhillipsRecker2025DiscMath}. 

The present work examined whether dyadic composition is algebraically necessary.

A ternary idempotent $\Gamma$-semiring was defined. 
It preserves idempotent aggregation and order monotonicity. 
It replaces binary multiplication with associative ternary composition. 
Window invariance guarantees well-defined path evaluation. 

A finite separation theorem established non-reducibility. 
There exists a ternary associative operation that does not factor through any associative binary operation. 
Thus ternary path composition strictly enlarges the dyadic semiring framework. 

Fixed-point solvability persists. 
The relaxation operator remains monotone. 
Least fixed points exist by lattice theory \cite{BranzeiPhillipsRecker2025DiscMath}. 
Hence order-theoretic solvability does not depend on binary multiplication. 

Complexity analysis revealed structural growth. 
Ternary relaxation operates on a lifted window graph. 
This reflects dimensional enlargement rather than algorithmic ornament. 
Binary semiring algorithms form the minimal case. 

The dimensional hierarchy extends naturally. 
Binary systems correspond to arity two. 
Ternary systems form the first nontrivial strict extension. 
Higher arity yields further enlargement. 

The theoretical boundary is therefore exact. 
Dyadic semiring path algebras constitute a proper subclass of ordered higher-arity path algebras. 
This conclusion rests on explicit algebraic separation rather than heuristic reasoning. 

The work identifies a structural limitation of classical semiring frameworks. 
It provides the minimal algebraic enlargement beyond dyadic composition. 
No speculative assumptions are introduced. 
All conclusions follow from stated axioms and finite constructions.
\section*{Acknowledgements}

The authors thank the anonymous reviewers for their careful reading and constructive observations. 
No institutional or external assistance influenced the mathematical content.

\section*{Funding}

No funding was received for this research.

\section*{Ethics Statement}

This article contains no studies involving human participants or animals. 
No ethical approval was required.

\section*{Data Availability}

No datasets were generated or analysed. 
All results are derived from explicit algebraic constructions within the manuscript.

\section*{Author Contributions}

Chandrasekhar Gokavarapu: conceptualization, formal definitions, theorem formulation, proofs, manuscript drafting.  

D.~Madhusudhana Rao: structural review, verification of proofs, mathematical validation, critical revisions.  

Both authors approved the final manuscript.

\section*{Conflict of Interest}

The authors declare no conflict of interest.

\bibliographystyle{plain}
\bibliography{refs}

\end{document}